   \numberwithin{equation}{section}
\journal{ } % ÐÞ¸ÄÔÓÖ¾Ãû
\newtheorem{thm}{Theorem}[section]
\newtheorem{lem}[thm]{Lemma}
\newtheorem{defn}[thm]{Definition}
\newtheorem{rem}[thm]{Remark}
\begin{document}
\begin{frontmatter}
\author{Tong Wu$^{a}$}
\ead{wut977@nenu.edu.cn}
\author{Yong Wang$^{b,*}$}
\ead{wangy581@nenu.edu.cn}
\cortext[cor]{Corresponding author.}
\address{$^a$Department of Mathematics, Northeastern University, Shenyang, 110819, China}
\address{$^b$School of Mathematics and Statistics, Northeast Normal University,
Changchun, 130024, China}

\title{The Rescaled Dirac operator $fDh$ and the noncommutative residue for 6-dimensional manifolds}
\begin{abstract}
In this paper, we compute the noncommutative residue for the rescaled Dirac operator $fDh$ on 6-dimensional compact manifolds without boundary. And we give the proof of the Kastler-Kalau-Walze type theorem for the rescaled Dirac operator $fDh$ on 6-dimensional compact manifolds with boundary. We also give some important special cases which can be solved by our calculation methods.
\end{abstract}
\begin{keyword}The noncommutative residue; the rescaled Dirac operator; the Kastler-Kalau-Walze type theorem.

\end{keyword}
\end{frontmatter}
\section{Introduction}
 Until now, many geometers have studied noncommutative residues. In \cite{Gu,Wo}, authors found noncommutative residues are of great importance to the study of noncommutative geometry. Connes showed us that the noncommutative residue on a compact manifold $M$ coincided with the Dixmier's trace on pseudodifferential operators of order $-{\rm {dim}}M$ in \cite{Co2}. Therefore, the non-commutative residue can be used as integral of noncommutative geometry and become an important tool of noncommutative geometry. In \cite{Co1}, Connes used the noncommutative residue to derive a conformal 4-dimensional Polyakov action analogy.
Several years ago, Connes made a challenging observation that the noncommutative residue of the square of the inverse of the Dirac operator was proportional to the Einstein-Hilbert action, which we call the Kastler-Kalau-Walze theorem. In \cite{Ka}, Kastler gave a bruteforce proof of this theorem. In \cite{KW}, Kalau and Walze proved this theorem in the normal coordinates system simultaneously. Ackermann proved that the Wodzicki residue of the square of the inverse of the Dirac operator ${\rm Wres}(D^{-2})$ in turn is essentially the second coefficient of the heat kernel expansion of $D^{2}$ in \cite{Ac}.

The many noncommutative residues of rescaled Dirac operators on spin manifolds have been studied \cite{Be,li,Wsn1}. In \cite{Be}, Benameur and Mathai defined the twisted Dirac operator, which is also conformally covariant with the same weights. Then they obtained Theorem ``Conformal Invariance of the Spin Rho Invariant". In \cite{li}, Li, Wang and Yang defined the generalized Zhang's operator and proved the Kastler-Kalau-Walze type theorem for 4-dimensional manifolds with boundary associated with the generalized Zhang's operator. In \cite{Wsn1}, Wei and Wang established a Kastler-Kalau-Walze type theorem pertaining to the transformation operators on 4-dimensional manifolds with boundary. In addition, conformal operators are also studied not only on spin manifolds but also on the noncommutative torus. In \cite{Si1}, Sitarz proposed a new idea of conformally rescaled and curved spectral triples, which
are obtained from a real spectral triple by a nontrivial scaling of the Dirac
operator. And they computed the Wodzicki residue and the Einstein-Hilbert functional for such family on the 4-dimensional noncommutative torus. In \cite{Si2}, Sitarz computed the Wodzicki residue of the inverse of a conformally rescaled
Laplace operator over a 4-dimensional noncommutative torus. We find that the above mentioned studies about conformal operators are all in the low-dimensional case, and most of them are considered in the 4-dimensional manifold. So we intend to solve the noncommutative residue problem of conformal operators in the high-dimensional case. Based on the research developed in above articles, we define the rescaled Dirac operator $fDh$, where $f$ and $h$ are two invertible smooth functions. Further, we give the proof of the Kastler-Kalau-Walze type theorem for the rescaled Dirac operator on 6-dimensional compact manifolds with boundary. Our method can be used to solve the above results in \cite{Be,li,Wsn1,Si1,Si2} on 6-dimensional manifolds. Our main theorems are followings.
\begin{thm}
Let $M$ be a $6$-dimensional oriented
compact spin manifold without boundary, $Q=(fDh)^{2}$, then the Kastler-Kalau-Walze type theorem for the rescaled Dirac operator is given
 \begin{align}
&{\rm Wres}(Q^{-2})\nonumber\\
&=8\pi^3\int_M\bigg(-\frac{1}{6}(fh)^{-4}s-2(fh)^{-6}f^2|\nabla(h)|^2-3(fh)^{-6}fg\Big(\nabla(h),\nabla(fh)\Big)-2(fh)^{-2}g\Big(\nabla[(fh)^{-3}f],\nabla(h)\Big)\nonumber\\
&+4(fh)^{-2}fg\Big(\nabla[(fh)^{-3}],\nabla(h)\Big)+4(fh)^{-5}\Delta(fh)+3(fh)^{-2}\Delta[(fh)^{-2}]-(fh)^{-6}|\nabla(fh)|^{2}\nonumber\\
&-\frac{2}{3}(fh)^{-2}\Delta[(fh)^{-2}]-(fh)^{-2}g\Big(\nabla[(fh)^{-3}],\nabla(hf)\Big)\bigg)d{\rm Vol_M}.
 \end{align}
 \end{thm}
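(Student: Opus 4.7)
The plan is to compute $\mathrm{Wres}(Q^{-2})$ directly from the Wodzicki residue formula via the symbol calculus. For a classical pseudodifferential operator $P$ of order $-6$ on a closed $6$-manifold,
\begin{equation*}
\mathrm{Wres}(P) = \int_M \int_{|\xi|=1} \mathrm{tr}\bigl[\sigma_{-6}(P)(x,\xi)\bigr]\, d\xi\, d\mathrm{Vol}_M,
\end{equation*}
so the objective is the degree $-6$ homogeneous piece of the total symbol of $Q^{-2}$, even though $Q^{-2}$ itself has order $-4$.

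First I would unfold $Q = fDhfDh$ using the commutator rule $Df = fD + c(df)$ (to pull all multiplications to the outside) and the Lichnerowicz identity $D^2 = \nabla^{\ast}\nabla + s/4$, putting $Q$ in the form
\begin{equation*}
Q = (fh)^2 D^2 + X^k D_k + V,
\end{equation*}
where $X^k$ is an endomorphism-valued vector field built from $c(df)$, $c(dh)$ and $c(d(fh))$ with scalar prefactors in $f,h$, and $V$ is a zeroth-order piece gathering $\Delta(fh)$, $|\nabla(fh)|^2$, $g(\nabla f,\nabla h)$, the Clifford quadratic $c(df)c(dh)$, and the scalar-curvature term $(fh)^2 s/4$. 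Working in normal coordinates at a base point $x_0$, so that the spin-connection coefficients vanish there, I would read off the total symbol $\sigma(Q) = q_2 + q_1 + q_0$ with principal part $q_2 = (fh)^2 |\xi|^2$.

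Next I would compute the parametrix symbol $\sigma(Q^{-1}) = r_{-2} + r_{-3} + r_{-4} + \cdots$ by the standard recursion
\begin{equation*}
r_{-2} = (fh)^{-2}|\xi|^{-2},\qquad r_{-2-j} = -q_2^{-1}\sum_{\substack{|\alpha|+k+\ell = j \\ \ell<j}}\frac{(-i)^{|\alpha|}}{\alpha!}\,\partial_\xi^\alpha q_{2-k}\,\partial_x^\alpha r_{-2-\ell},
\end{equation*}
up to $r_{-4}$. Then $\sigma(Q^{-2}) = \sigma(Q^{-1})\#\sigma(Q^{-1})$ via the Leibniz star product, whose total-degree $-6$ component is
\begin{equation*}
\sigma_{-6}(Q^{-2}) = r_{-2}r_{-4} + r_{-4}r_{-2} + r_{-3}^{\,2} - i\sum_k\bigl(\partial_{\xi_k}r_{-2}\,\partial_{x_k}r_{-3} + \partial_{\xi_k}r_{-3}\,\partial_{x_k}r_{-2}\bigr) - \tfrac{1}{2}\sum_{j,k}\partial_{\xi_j}\partial_{\xi_k}r_{-2}\,\partial_{x_j}\partial_{x_k}r_{-2}.
\end{equation*}
Finally I would take the pointwise spinor trace (using $\mathrm{tr}(\mathrm{id}) = 8$ on the rank-$8$ spinor bundle and standard Clifford trace identities such as $\mathrm{tr}(c(\xi)c(\eta)) = -8\,g(\xi,\eta)$) and integrate over $S^5 \subset T^\ast_{x_0} M$ using the moment formulas $\int_{S^5}\xi_i\xi_j\,d\xi = \tfrac{\pi^3}{6}\delta_{ij}$ and $\mathrm{Vol}(S^5) = \pi^3$. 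The overall prefactor $8\pi^3$ in the statement is exactly $\mathrm{tr}(\mathrm{id})\cdot\mathrm{Vol}(S^5)$; the Lichnerowicz contribution yields the term $-\tfrac{1}{6}(fh)^{-4}s$, while every other term is produced by derivatives of $f$ and $h$ entering $q_1$, $q_0$, and the star-product cross terms.

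The main obstacle is not any single step but the bookkeeping: there are several separate contributions from $r_{-4}$, $r_{-3}^{\,2}$, and the first- and second-order corrections in the star product, and because $f$ and $h$ play asymmetric roles in $fDh$, combinations such as $g(\nabla[(fh)^{-3}f],\nabla h)$, $g(\nabla[(fh)^{-3}],\nabla h)$ and $g(\nabla[(fh)^{-3}],\nabla(hf))$, together with the two distinct occurrences of $(fh)^{-2}\Delta[(fh)^{-2}]$ with coefficients $3$ and $-\tfrac{2}{3}$, must be kept separate according to which piece of the symbol recursion produces them, rather than combined prematurely. Working in normal coordinates and isolating the Lichnerowicz curvature contribution from the gradient contributions is what keeps the calculation organized.
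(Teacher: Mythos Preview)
Your proposal is correct and follows essentially the same approach as the paper: expand $Q$ via commutators to obtain $\sigma_2^Q,\sigma_1^Q,\sigma_0^Q$, run the parametrix recursion to get $r_{-2},r_{-3},r_{-4}$, assemble $\sigma_{-6}(Q^{-2})$ from the star product $\sigma(Q^{-1})\#\sigma(Q^{-1})$, then take spinor traces and integrate over $S^5$ using the moment formulas and $\mathrm{tr}(\mathrm{id})\cdot\mathrm{Vol}(S^5)=8\pi^3$. The only organizational difference is that the paper replaces the cross term $-i\,\partial_{\xi_\mu}r_{-2}\,\partial_{x_\mu}r_{-3}$ by substituting the parametrix recursion relation for $r_{-4}$ back in, which collapses the star-product expression to the slightly more compact form $3r_{-2}r_{-4}+r_{-2}^{3}\sigma_0^{Q}+r_{-3}^{2}+\cdots$ before tracing; your direct expansion would reach the same integrand with a few more intermediate terms to track.
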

 \begin{thm}
Let $M$ be a $6$-dimensional oriented
compact spin manifold with boundary $\partial M$, $Q=(fDh)^{2}$, then the Kastler-Kalau-Walze type theorem for the rescaled Dirac operator is given
\begin{align}\label{a20}
&\widetilde{{\rm Wres}}[\pi^+Q^{-1}\circ\pi^+Q^{-1}]\nonumber\\
&=8\pi^3\int_M\bigg(-\frac{1}{6}(fh)^{-4}s-2(fh)^{-6}f^2|\nabla(h)|^2-3(fh)^{-6}fg\Big(\nabla(h),\nabla(fh)\Big)-2(fh)^{-2}g\Big(\nabla[(fh)^{-3}f],\nabla(h)\Big)\nonumber\\
&+4(fh)^{-2}fg\Big(\nabla[(fh)^{-3}],\nabla(h)\Big)+4(fh)^{-5}\Delta(fh)+3(fh)^{-2}\Delta[(fh)^{-2}]-(fh)^{-6}|\nabla(fh)|^{2}\nonumber\\
&-\frac{2}{3}(fh)^{-2}\Delta[(fh)^{-2}]-(fh)^{-2}g\Big(\nabla[(fh)^{-3}],\nabla(hf)\Big)\bigg)d{\rm Vol_M}.
\end{align}
In particular, the boundary term vanishes.
\end{thm}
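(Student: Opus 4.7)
My plan is to invoke the decomposition of the noncommutative residue on a manifold with boundary, due to Wang, that splits
\[
\widetilde{\rm Wres}[\pi^+Q^{-1}\circ\pi^+Q^{-1}]=\int_M{\rm Wres}(Q^{-2})\,d{\rm Vol}_M+\int_{\partial M}\Phi
\]
into an interior residue density plus a local boundary term $\Phi$ expressed through the symbols of $Q^{-1}$ in boundary normal coordinates. The interior integral is exactly the right-hand side of Theorem~1.1, so the proof reduces to establishing that $\Phi\equiv 0$.

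I would begin by computing the symbol expansion of $Q=(fDh)^{2}$. Since $fDh$ has principal symbol $fh\,c(\xi)$, where $c(\xi)$ denotes Clifford multiplication, I get $\sigma_{2}(Q)=(fh)^{2}|\xi|^{2}$, while the first- and zero-order pieces $\sigma_{1}(Q),\sigma_{0}(Q)$ encode the derivatives of $f,h$ together with the spin connection. Inverting $\sigma(Q)$ order by order via the standard recursion yields $\sigma_{-2}(Q^{-1})=(fh)^{-2}|\xi|^{-2}$ and explicit, though lengthy, expressions for $\sigma_{-3}(Q^{-1})$ and $\sigma_{-4}(Q^{-1})$ linear and quadratic in $\nabla f,\nabla h,\nabla^{2}f,\nabla^{2}h$. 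I would then work in boundary normal coordinates $(x',x_{n})$ with $g^{in}=\delta^{in}$, restrict to $x_{n}=0$ and $|\xi'|=1$, and apply $\pi^{+}$ in the variable $\xi_{n}$ by a contour integral closing in the upper half plane, picking up the residue at $\xi_{n}=i$.

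In six dimensions the boundary term is a finite sum of integrals of the form
\[
\int_{|\xi'|=1}\int_{-\infty}^{+\infty}\frac{(-i)^{|\alpha|+j+k+1}}{\alpha!(j+k+1)!}{\rm tr}\bigg[\partial^{\alpha}_{\xi'}\partial^{j}_{\xi_{n}}\partial^{k}_{x_{n}}\pi^{+}_{\xi_{n}}\sigma_{r}(Q^{-1})\cdot\partial^{\ell}_{x_{n}}\partial^{\alpha}_{x'}\partial^{j+k+1}_{\xi_{n}}\sigma_{l}(Q^{-1})\bigg]d\xi_{n}\,\sigma(\xi'),
\]
summed over indices with the appropriate total weight and with $r,l\leq -2$. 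A sizeable fraction of summands vanishes immediately: traces of an odd number of Clifford generators are zero, and integrands odd in $\xi_{n}$ integrate to zero. The remaining summands I would organise into two groups, namely those involving no derivative of $f,h$ (which reproduce the classical boundary computation for $D^{-2}$ and are known to vanish in even dimensions) and those carrying at least one factor of $\nabla f$ or $\nabla h$.

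The main obstacle I anticipate is the algebraic bookkeeping for this second group. After the $\xi_{n}$ contour integration, each surviving summand reduces to a rational expression in $f,h,\nabla f,\nabla h$ multiplied by a Clifford trace against $c(\xi')$ and possibly $c(dx_{n})$; the expected cancellation should rest on standard Clifford-trace identities together with the symmetry between the pairs $(f,h)$ and $(\nabla f,\nabla h)$ already visible in the expressions for $\sigma_{-3}(Q^{-1}),\sigma_{-4}(Q^{-1})$. Verifying term-by-term that all such contributions cancel is where the bulk of the work lies, and once $\Phi\equiv 0$ is established the conclusion (\ref{a20}) follows from Theorem~1.1.
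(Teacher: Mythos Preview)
Your overall strategy matches the paper's: decompose $\widetilde{\rm Wres}$ into interior plus boundary, identify the interior with Theorem~1.1, and show $\Phi=0$ by a case-by-case symbol computation in boundary normal coordinates. A few points where your outline deviates from what actually occurs are worth flagging.

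First, the weight constraint $r+l-k-j-|\alpha|-1=-6$ with $r,l\le -2$ forces $r,l\in\{-2,-3\}$, so $\sigma_{-4}(Q^{-1})$ never enters the boundary term; only $\sigma_{-2}$ and $\sigma_{-3}$ are needed, and the case list reduces to exactly five summands $\Phi_1,\dots,\Phi_5$. Second, your claim that integrands odd in $\xi_n$ integrate to zero is not correct: after applying $\pi^+_{\xi_n}$ the integrand has no parity in $\xi_n$, and the $\xi_n$ integral is computed by residues at $\xi_n=i$. The genuine parity simplification is over $|\xi'|=1$, where odd monomials in $\xi'$ vanish; this kills $\Phi_1$ outright and removes most of the $\nabla f,\nabla h$ contributions in the remaining cases. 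Third, the final cancellation is not driven by any $(f,h)\leftrightarrow(\nabla f,\nabla h)$ symmetry or by Clifford identities. The paper simply evaluates each $\Phi_i$ explicitly and finds $\Phi_2+\Phi_3=0$ and $\Phi_4+\Phi_5=0$, each pair containing matching terms proportional to $(fh)^{-2}\partial_{x_n}[(fh)^{-2}]$ and to $(fh)^{-4}h'(0)$ (here $h'(0)$ is the derivative of the metric collar function, not of the operator's $h$). So the ``main obstacle'' you anticipate is smaller than it looks: the derivative terms largely disappear via $\xi'$-parity, and what survives cancels in pairs by direct arithmetic rather than by any structural symmetry.
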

\begin{rem}
We summarized some operators which are the special case of our operator $fDh$, the noncommutative residues of them can be generalised to 6-dimensional manifold by our methods.\\
(i)The generalized Zhang's operator $D_V=-i\Big(\widehat{c}(V)(d+\delta)+t\sum_jc(e_j)\widehat{c}(\nabla_{e_j}^{TM}V)\Big),$ where $|V|\neq1$ in \cite{li}.\\
(ii)Let $h=f^{-1}$, then $Q=(fDf^{-1})^2$ is the generalized laplacian operator. This case is a special case of the conformal operator studied in this paper.\\
(iii)In \cite{Wsn1}, transformation operators $\widetilde{D}=\frac{1}{2}fD+c(df),$ where $c(df)$ denotes a Clifford action on manifold $M$ and $f$ is a nonzero smooth function on $M$.\\
(iv)A generalised family of Laplace operator for the conformally rescaled metric over the noncommutative torus in \cite{Si2} can be generalised to the noncommutative case in dimension 6.\\
(v)Inspired by \cite{Co5}, we can define the rescaled Dirac operator $fD^2f$, and study it on 6-dimensional manifold.\\
(vi)In \cite{lo}, take $g'=e^{2\sigma}g,~$ $dimM=n.$ Then the conformal Dirac operator $D_{g'}=e^{-\frac{n+1}{2}\sigma}D_ge^{\frac{(n-1)\sigma}{2}}.$
\end{rem}
\indent The paper is organized in the following way. In Section \ref{section:2}, firstly, we introduce the Dirac operator. On this basis, we define the rescaled Dirac operator, and calculate its symbols. In Section \ref{section:3}, by using ${\rm Wres}(P):=\int_{S^*M}{\rm tr}(\sigma_{-n}^P)(x,\xi)$, we compute the interior term of the noncommutative residue $\widetilde{{\rm Wres}}[\pi^+Q^{-1}\circ\pi^+Q^{-1}]$ for the rescaled Dirac operator, that is the Kastler-Kalau-Walze type theorem of the rescaled Dirac operator on 6-dimensional manifols without boundary. In Section \ref{section:4}, we recall some basic facts and formulas about Boutet de
Monvel's calculus, and we compute the boundary term of the noncommutative residue $\widetilde{{\rm Wres}}[\pi^+Q^{-1}\circ\pi^+Q^{-1}]$ for the rescaled Dirac operator. Finally, we prove the Kastler-Kalau-Walze type theorem of the rescaled Dirac operator on 6-dimensional manifolds with boundary.
%%%%%%%%%%µÚ¶þ²¿·Ö%%%
\section{The rescaled Dirac operator and its symbols}
\label{section:2}
Let $M$ be an n-dimensional compact oriented spin manifold with Riemannian metric $g$, and let $\nabla^L$ be the Levi-Civita connection about $g$.
We recall that the Dirac operator $D$ is locally given as follows in terms of an orthonormal section $e_i$ (with dual section $\theta^k$) of the frame bundle of M \cite{Ka}:
\begin{align}
&D=i\gamma^i\widetilde{\nabla}_i=i\gamma^i(e_i+\sigma_i);\nonumber\\
&\sigma_i(x)=\frac{1}{4}\gamma_{ij,k}(x)\gamma^i\gamma^k=\frac{1}{8}\gamma_{ij,k}(x)[\gamma^j\gamma^k-\gamma^k\gamma^j],
\end{align}
where $\gamma_{ij,k}$ represents the Levi-Civita connection $\nabla$ with spin connection $\widetilde{\nabla}$, specifically:
\begin{align}
&\gamma_{ij,k}=-\gamma_{ik,j}=\frac{1}{2}[c_{ij,k}+c_{ki,j}+c_{kj,i}],~~~i,j,k=1,\cdot\cdot\cdot,4;\nonumber\\
&c_{ij}^k=\theta^k([e_i.e_j]).
\end{align}
Here the $\gamma^i$ are constant self-adjoint Dirac matrices s.t. $\gamma^i\gamma^j+\gamma^j\gamma^i=-2\delta^{ij}.$ In terms
of local coordinates $x^\mu$ inducing the alternative vierbein $\partial_\mu=S_\mu^i(x)e_i$ (with dual
vierbein $dx^\mu$) we have $\gamma^ie_i=\gamma^\mu \partial_\mu$, the $\gamma^\mu$  being now $x$-dependent Dirac matrices s.t. $\gamma^\mu\gamma^\nu+\gamma^\nu\gamma^\mu=-2g^{\mu\nu}$ (we use latin sub-(super-) scripts for the basic $e_i$ and greek
sub-(super-) scripts for the basis $\partial_\mu$, the type of sub-(super-) scripts specifying the type
of Dirac matrices). The specification of the Dirac operator in the greek basis is as
follows: one has
\begin{align*}
&D=i\gamma^\mu\widetilde{\nabla}_\mu=i\gamma^\mu(e_\mu+\sigma_\mu);\nonumber\\
&\sigma_\mu(x)=S_\mu^i(x)\sigma_i.
\end{align*}
Then we have the symbol of $D$:
\begin{lem}\cite{Wa3}\label{lemma0}
\begin{align}
\sigma_1(D)&=ic(\xi);\nonumber\\
\sigma_0(D)&=-\frac{1}{4}\sum_{s,t}\omega_{s,t}
(e_i)c(e_i)c(e_s)c(e_t).
\end{align}
\end{lem}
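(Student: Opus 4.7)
The plan is to read off both homogeneous components of the full symbol directly from the local coordinate expression $D = i\gamma^\mu(\partial_\mu + \sigma_\mu)$ given just above the lemma. Since $D$ is a first-order differential operator, its full symbol consists only of the parts of order $1$ and $0$; there is no pseudodifferential regularization or asymptotic expansion to manage.

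First I would handle $\sigma_1(D)$ by extracting the coefficient of $\partial_\mu$. In the symbol convention compatible with the overall factor of $i$ already sitting in front of $D$ (so that the resulting principal symbol carries the Clifford action of the covector variable), the principal symbol of $i\gamma^\mu\partial_\mu$ is $i\gamma^\mu\xi_\mu$. Recognising the Clifford action of $\xi = \xi_\mu\,dx^\mu$ as $c(\xi) = \gamma^\mu\xi_\mu$, this is exactly $\sigma_1(D) = ic(\xi)$. The frame-independence of this contraction, together with $\gamma^\mu = S^\mu_i\gamma^i$, makes the identification immediate.

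For $\sigma_0(D)$ I would extract the zeroth-order part $i\gamma^\mu\sigma_\mu = i\gamma^i\sigma_i$ (using the frame-invariance of the contraction) and substitute the explicit expression $\sigma_i = \tfrac{1}{8}\gamma_{ij,k}(\gamma^j\gamma^k - \gamma^k\gamma^j) = \tfrac{1}{4}\gamma_{ij,k}\gamma^j\gamma^k$, where the second form uses $\gamma_{ij,k} = -\gamma_{ik,j}$ to absorb the commutator into a single triple product. The resulting expression $\tfrac{i}{4}\gamma_{ij,k}\gamma^i\gamma^j\gamma^k$ is then rewritten in terms of the Levi-Civita connection one-form in the orthonormal frame by identifying $\gamma_{st,i} = \omega_{s,t}(e_i)$ and rewriting each $\gamma^\ell$ as the Clifford action $c(e_\ell)$; after relabelling the summation indices this becomes $-\tfrac{1}{4}\sum_{s,t}\omega_{s,t}(e_i)\,c(e_i)c(e_s)c(e_t)$, matching the claim.

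The main obstacle is careful sign bookkeeping. Two distinct sign conventions are in play: the Clifford relation $\gamma^i\gamma^j + \gamma^j\gamma^i = -2\delta^{ij}$ used in this paper (as opposed to the $+2\delta^{ij}$ convention common elsewhere), and the antisymmetry $\gamma_{ij,k} = -\gamma_{ik,j}$ for the connection coefficients. Together these dictate both the overall minus sign in $\sigma_0(D)$ and the disappearance of the explicit factor of $i$ once the coefficients are repackaged as $\omega_{s,t}(e_i)$. Once these conventions are fixed consistently the computation is entirely routine, so the proof amounts to a careful bookkeeping exercise rather than any substantial new analysis.
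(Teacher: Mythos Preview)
The paper does not supply its own proof of this lemma; it is quoted directly from \cite{Wa3} and simply stated. Your approach---reading off the order-$1$ and order-$0$ pieces of the full symbol from the local expression $D = i\gamma^\mu(\partial_\mu + \sigma_\mu)$, then rewriting the spin-connection contribution $i\gamma^i\sigma_i$ in the $\omega_{s,t}(e_i)c(e_i)c(e_s)c(e_t)$ form---is exactly the standard derivation and is correct in outline. Your caveat about sign bookkeeping is well placed: the specific symbol convention (here $\sigma(\partial_\mu)=\xi_\mu$, consistent with $\sigma_2(D^2)=|\xi|^2$ under $c(\xi)^2=-|\xi|^2$) and the precise identification between $\gamma_{ij,k}$ and $\omega_{s,t}(e_i)$ are the only places one can go wrong, and you have flagged them appropriately.
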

Then, using the shorthand: $\Gamma^{k}=g^{ij}\Gamma_{ij}^{k};~~\sigma^{j}=g^{ij}\sigma_{i}$, we have respective symbols of $D^{2}$:
\begin{lem}\cite{Ka}\label{lemma1}
\begin{align}
\sigma_2(D^2)&=|\xi|^2;\nonumber\\
\sigma_1(D^2)&=i(\Gamma^\mu-2\sigma^\mu)(x)\xi_\mu;\nonumber\\
\sigma_0(D^2)&=-g^{\mu\nu}(\partial^{x}_\mu\sigma_\nu+\sigma^\mu\sigma_\nu-\Gamma^\alpha_{\mu\nu}\sigma_\alpha)(x)+\frac{1}{4}s(x).
\end{align}
\end{lem}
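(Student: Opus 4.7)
The plan is to apply the standard asymptotic composition rule for pseudodifferential symbols
$$\sigma(P\circ Q)(x,\xi)\sim\sum_{\alpha}\frac{(-i)^{|\alpha|}}{\alpha!}\,\partial_\xi^\alpha\sigma(P)\,\partial_x^\alpha\sigma(Q)$$
to the product $D\circ D$, substituting $\sigma_1(D)=ic(\xi)=i\gamma^\mu\xi_\mu$ and $\sigma_0(D)=-\tfrac{1}{4}\sum_{s,t}\omega_{s,t}(e_i)c(e_i)c(e_s)c(e_t)$ from Lemma \ref{lemma0}, and then reading off the three leading orders of homogeneity in $\xi$. The Clifford relation $\gamma^\mu\gamma^\nu+\gamma^\nu\gamma^\mu=-2g^{\mu\nu}$ is the main algebraic tool throughout.

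For the principal symbol, $\sigma_2(D^2)=\sigma_1(D)\sigma_1(D)=-\gamma^\mu\gamma^\nu\xi_\mu\xi_\nu$, which symmetrizes in $\mu,\nu$ via the Clifford identity to $g^{\mu\nu}\xi_\mu\xi_\nu=|\xi|^2$.

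For the subprincipal symbol, I collect the two order-$1$ contributions: the algebraic piece $\sigma_1(D)\sigma_0(D)+\sigma_0(D)\sigma_1(D)$ and the first-derivative correction $-i\,\partial_{\xi_\mu}\sigma_1(D)\,\partial_{x^\mu}\sigma_1(D)$. Anticommuting $c(\xi)$ past $\sigma_0(D)$ through the Clifford identity collapses the algebraic piece to $-2i\sigma^\mu\xi_\mu$, while the derivative correction, after applying $\partial_\mu\gamma^\nu=-\Gamma^\nu_{\mu\alpha}\gamma^\alpha$ and again symmetrizing in the Clifford algebra, contributes the Christoffel term $i\Gamma^\mu\xi_\mu$. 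Summing yields the claimed $i(\Gamma^\mu-2\sigma^\mu)\xi_\mu$.

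For $\sigma_0(D^2)$, the order-$0$ contributions are the product $\sigma_0(D)\sigma_0(D)$, the mixed first-derivative correction $-i\,\partial_{\xi_\mu}\sigma_1(D)\,\partial_{x^\mu}\sigma_0(D)$, and the second-derivative correction $-\tfrac{1}{2}\,\partial_{\xi_\mu}\partial_{\xi_\nu}\sigma_1(D)\,\partial_{x^\mu}\partial_{x^\nu}\sigma_1(D)$. Regrouping the pieces that involve only $\sigma_\nu$ and its first derivatives into manifestly covariant form produces the connection-Laplacian coefficient $-g^{\mu\nu}(\partial_\mu^x\sigma_\nu+\sigma_\mu\sigma_\nu-\Gamma^\alpha_{\mu\nu}\sigma_\alpha)$. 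The main obstacle will be extracting the remaining scalar-curvature piece $\tfrac{1}{4}s(x)$: this requires expanding the quartic Clifford monomials appearing in $\sigma_0(D)\sigma_0(D)$ and in the second-derivative correction, using the Clifford relation repeatedly to convert $\gamma\gamma\gamma\gamma$ products into contractions with the curvature tensor, and then invoking the first Bianchi identity to reduce $\gamma^\mu\gamma^\nu\gamma^\alpha\gamma^\beta R_{\mu\nu\alpha\beta}$ to $-2s$. This last identification is precisely the content of the Lichnerowicz formula $D^2=\nabla^*\nabla+\tfrac{1}{4}s$, and it is what makes the scalar curvature appear in the zeroth-order symbol.
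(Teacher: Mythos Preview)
The paper does not supply its own proof of this lemma; it simply quotes the result from Kastler \cite{Ka}. Your approach via the composition formula (the paper's (\ref{1111})) is the standard route and is, in outline, what Kastler does in the cited reference, so there is nothing in the present paper to compare against beyond saying your strategy is the expected one.

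Two points of bookkeeping are worth tightening. In the order-$1$ step, the relation you invoke, $\partial_\mu\gamma^\nu=-\Gamma^\nu_{\mu\alpha}\gamma^\alpha$, only holds modulo the commutator $[\sigma_\mu,\gamma^\nu]$; spin--Levi-Civita compatibility reads $\partial_\mu\gamma^\nu+[\sigma_\mu,\gamma^\nu]+\Gamma^\nu_{\mu\alpha}\gamma^\alpha=0$. Likewise, the algebraic piece $\sigma_1(D)\sigma_0(D)+\sigma_0(D)\sigma_1(D)$ does not collapse to $-2i\sigma^\mu\xi_\mu$ on its own, since $\sigma_\nu$ does not commute with $\gamma^\mu$. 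The leftover commutator terms from these two places are what cancel against each other to produce the clean $i(\Gamma^\mu-2\sigma^\mu)\xi_\mu$; your sketch hides this cancellation. In the order-$0$ step, the term $-\tfrac{1}{2}\,\partial_{\xi_\mu}\partial_{\xi_\nu}\sigma_1(D)\,\partial_{x^\mu}\partial_{x^\nu}\sigma_1(D)$ vanishes identically because $\sigma_1(D)=ic(\xi)$ is linear in $\xi$, so it cannot carry any quartic Clifford monomials as you suggest. The scalar-curvature contribution $\tfrac{1}{4}s$ arises entirely from $\sigma_0(D)\sigma_0(D)$ together with $-i\,\partial_{\xi_\mu}\sigma_1(D)\,\partial_{x^\mu}\sigma_0(D)$, through the Lichnerowicz/Bianchi manipulation you correctly identify. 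None of this is a fatal gap; with these corrections your argument goes through.
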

We now introduce the rescaled Dirac operator $fDh$, where $f$ and $h$ are two invertible smooth functions. Let $Q=(fDh)^2,$ then we have
\begin{align}\label{a2}
Q&=fDhfDh\nonumber\\
&=f\Big(hfD+[D,hf]\Big)Dh\nonumber\\
&=fhfD^2h+fc(d(hf))Dh\nonumber\\
&=fhf\Big(hD^2+[D^2,h]\Big)+fc(d(hf))\Big(hD+[D,h]\Big)\nonumber\\
&=fhfhD^2+fhf[D^2,h]+fhc(d(hf))D+fc(d(hf))c(d(h)),
\end{align}
where $c(d(h))$ denotes a Clifford action on manifold $M$.
\begin{lem}\cite{UW}\label{lema2}
Let $S$ be pseudo-differential operator of order $k$ and $f$ is a smooth function, $[S,f]$ is a pseudo-differential operator of order $k-1$ with total symbol
$\sigma[S,f]\sim\sum_{j\geq 1}\sigma_{k-j}[S,f]$, where
\begin{align}
\sigma_{k-j}[S,f]=\sum_{|\beta|=1}^j\frac{D_x^\beta(f)}{\beta!}\partial_\xi^\beta(\sigma^S_{k-(j-|\beta|)}).
\end{align}
\end{lem}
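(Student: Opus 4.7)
The plan is to derive the commutator formula from the standard composition rule for pseudodifferential operators, by viewing multiplication by $f$ as a zeroth-order operator $M_f$ whose total symbol $f(x)$ is independent of $\xi$. First I would write $[S,f]=S\circ M_f-M_f\circ S$ and apply the asymptotic composition rule
\begin{align*}
\sigma(P\circ Q)(x,\xi)\sim\sum_{\alpha}\frac{1}{\alpha!}\,\partial_\xi^\alpha\sigma^P(x,\xi)\cdot D_x^\alpha\sigma^Q(x,\xi)
\end{align*}
to each composition.

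For $S\circ M_f$ this gives $\sum_\alpha\frac{1}{\alpha!}\partial_\xi^\alpha\sigma^S\cdot D_x^\alpha f$; for $M_f\circ S$, since $\partial_\xi^\alpha f=0$ whenever $|\alpha|\geq 1$, only the $\alpha=0$ term survives and one obtains $f\cdot\sigma^S$. The $\alpha=0$ contributions in the two expansions coincide (since $f$ is scalar and hence commutes with any matrix-valued $\sigma^S$), so subtracting them cancels the principal part and leaves
\begin{align*}
\sigma([S,f])\sim\sum_{|\alpha|\geq 1}\frac{1}{\alpha!}\,\partial_\xi^\alpha\sigma^S\cdot D_x^\alpha f,
\end{align*}
which also confirms that $[S,f]$ is of order $k-1$. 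To extract $\sigma_{k-j}[S,f]$ I would then sort the right-hand side by homogeneity: a summand $\frac{1}{\beta!}D_x^\beta f\cdot\partial_\xi^\beta\sigma^S_{k-i}$ has degree $k-i-|\beta|$ in $\xi$, so to land at degree $k-j$ one needs $i+|\beta|=j$ with $|\beta|\geq 1$, i.e.\ $i=j-|\beta|$ for $|\beta|$ ranging from $1$ to $j$; collecting these contributions reproduces the formula in the statement.

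The only delicate point, and the one I would verify carefully, is that the formal manipulation above genuinely gives an asymptotic expansion in the symbol-class sense and not merely a formal power series; this is standard and is established via the oscillatory-integral representation of the composition together with repeated integration by parts to control the remainder. Apart from this bookkeeping, the argument is a direct application of the composition rule combined with the fact that $f$ carries no $\xi$-dependence, so I do not anticipate any genuine obstacle beyond the remainder estimate.
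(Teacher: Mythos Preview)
Your argument is correct and is the standard derivation of this commutator formula from the composition rule for pseudodifferential operators. Note, however, that the paper does not actually prove this lemma: it is quoted as a known result with the citation \cite{UW}, so there is no ``paper's own proof'' to compare against. Your proposal thus supplies a self-contained proof where the paper simply invokes the literature, and the derivation you outline (cancellation of the $\alpha=0$ terms, then sorting the remaining $|\alpha|\geq 1$ contributions by homogeneity degree) is exactly the expected one.
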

Our computations are based on the algorithm yielding the principal symbol of a
product of pseudo-differential operators in terms of the principal symbols of the
factors, namely, with the shorthand $\partial^\alpha_\xi=\partial^\alpha/\partial\xi_\alpha,~~\partial_\alpha^x=\partial_\alpha/\partial x^\alpha:$
\begin{align}\label{1111}
\sigma^{AB}(x,\xi)=\sum_\alpha\frac{(-i)^\alpha}{\alpha!}\partial^\alpha_\xi\sigma^{A}(x,\xi)\cdot\partial_\alpha^x\sigma^{B}(x,\xi).
\end{align}
Then by Lemma \ref{lemma1} and Lemma \ref{lema2}, we get the following lemma.
\begin{lem}\label{lemaa3}The symbols of $Q$ are given
\begin{align}
\sigma_2^Q&=(fh)^2|\xi|^2;\nonumber\\
\sigma_1^Q&=i(fh)^2(\Gamma^\mu-2\sigma^\mu)(x)\xi_\mu-2ifhf\sum_{jl=1}^n\partial_{x_j}(h)g^{jl}\xi_l+ifhc(d(hf))c(\xi);\nonumber\\
\sigma_0^Q&=(fh)^2[-g^{\mu\nu}(\partial^{x}_\mu\sigma_\nu+\sigma^\mu\sigma_\nu-\Gamma^\alpha_{\mu\nu}\sigma_\alpha)(x)+\frac{1}{4}s(x)]+fhf[\sum_{j=1}^n\partial_{x_j}(h)(\Gamma^j-2\sigma^j)(x)\nonumber\\
&-\sum_{jl=1}^n\partial_{x_j}\partial_{x_l}(h)g^{jl}]-\frac{1}{4}fhc(d(hf))\sum_{s,t}\omega_{s,t}
(e_i)c(e_i)c(e_s)c(e_t)+fc(d(hf))c(dh).
\end{align}
\end{lem}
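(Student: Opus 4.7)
The plan is to expand $Q=(fDh)^2$ into the sum of four elementary operators already written out in \eqref{a2}:
\begin{align*}
Q = fhfhD^2 + fhf[D^2,h] + fhc(d(hf))D + fc(d(hf))c(d(h)),
\end{align*}
and then to compute the homogeneous symbols of each summand separately, adding the pieces of matching order. For the three summands $fhfhD^2$, $fhc(d(hf))D$, and $fc(d(hf))c(d(h))$, the only operation is left-multiplication by a $\xi$-independent (possibly matrix-valued) smooth function, so the composition formula \eqref{1111} degenerates to pointwise multiplication of symbols: all $\partial_\xi^\alpha$ applied to the coefficient vanish for $|\alpha|\geq 1$. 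These three contributions are then obtained by multiplying the expressions in Lemma \ref{lemma1} and Lemma \ref{lemma0} by the corresponding coefficient, namely $(fh)^2$ times the three pieces of $\sigma(D^2)$, $fhc(d(hf))$ times $\sigma_1(D)=ic(\xi)$ and $\sigma_0(D)$, and finally $fc(d(hf))c(dh)$ as a pure order-zero piece.

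The genuinely new computation is the commutator $fhf[D^2,h]$, which is a first-order operator whose symbol I would extract using Lemma \ref{lema2} with $S=D^2$ of order $k=2$. At order $1$ only $|\beta|=1$ contributes, and differentiating $\sigma_2(D^2)=g^{kl}\xi_k\xi_l$ once in $\xi$ yields
\begin{align*}
\sigma_1[D^2,h] = -2i\sum_{j,l}\partial_{x_j}(h)g^{jl}\xi_l.
\end{align*}
At order $0$ both $|\beta|=1$ (applied to $\sigma_1(D^2)=i(\Gamma^\mu-2\sigma^\mu)\xi_\mu$) and $|\beta|=2$ (applied to $\sigma_2(D^2)$) contribute; collecting the second-derivative terms $\partial_{\xi_j}\partial_{\xi_l}(g^{pq}\xi_p\xi_q)=2g^{jl}$ with multinomial weights $1/\beta!$ produces
\begin{align*}
\sigma_0[D^2,h] = \sum_{j}\partial_{x_j}(h)(\Gamma^j-2\sigma^j) - \sum_{j,l}\partial_{x_j}\partial_{x_l}(h)g^{jl}.
\end{align*}
Multiplying by the scalar $fhf$ then delivers exactly the terms attributed to $fhf[D^2,h]$ in the claimed formulas for $\sigma_1^Q$ and $\sigma_0^Q$.

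Finally, summing the four contributions at each homogeneity level reproduces the three displayed symbols. The main obstacle I anticipate is the sign and combinatorial bookkeeping in $\sigma_0[D^2,h]$: one has to track the factors of $-i$ coming from the convention $D_x=-i\partial_x$, the weights $1/\beta!$ for $|\beta|=2$, and the diagonal versus cross contributions of the second $\xi$-derivative of $|\xi|^2$, so that the two-fold sum over $(j,l)$ collapses cleanly to the Laplacian-type expression $\sum_{j,l}\partial_{x_j}\partial_{x_l}(h)g^{jl}$ rather than a half- or double-counted variant. Once this is done carefully, the remainder of the argument is direct substitution and assembly of like-order terms.
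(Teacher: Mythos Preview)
Your approach is correct and essentially identical to the paper's: the paper simply states that the lemma follows from the decomposition \eqref{a2} together with Lemma \ref{lemma1} and Lemma \ref{lema2}, and your proposal fills in exactly those details (additionally invoking Lemma \ref{lemma0} for the $fhc(d(hf))D$ piece). Your computation of $\sigma_1[D^2,h]$ and $\sigma_0[D^2,h]$ via Lemma \ref{lema2}, including the $1/\beta!$ bookkeeping for $|\beta|=2$, is the intended argument.
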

\begin{lem}\label{qwe}The symbols of $Q^{-1}$ are given
\begin{align}
\sigma_{-2}^{Q^{-1}}&=(fh)^{-2}|\xi|^{-2};\nonumber\\
\sigma_{-3}^{Q^{-1}}&=-i(fh)^{-2}|\xi|^{-4}(\Gamma^\mu-2\sigma^\mu)(x)\xi_\mu+2i(fh)^{-3}f|\xi|^{-4}\sum_{jl=1}^n\partial_{x_j}(h)g^{jl}\xi_l-i(fh)^{-3}|\xi|^{-4}c(d(hf))c(\xi)\nonumber\\
&-4i(fh)^{-3}|\xi|^{-4}g^{\mu\nu}\xi_\nu\partial_{x_\mu}(fh)-2i(fh)^{-2}|\xi|^{-6}g^{\mu\nu}\xi_\nu\partial_{x_\mu}(g^{\alpha\beta})\xi_\alpha\xi_\beta;\nonumber\\
\sigma_{-4}^{Q^{-1}}(x_0)&=-\frac{1}{4}(fh)^{-2}|\xi|^{-4}s+\frac{2}{3}(fh)^{-2}|\xi|^{-6}R_{\alpha a\alpha\mu}(x_0)\xi_a\xi_\mu-4(fh)^{-4}f^2|\xi|^{-6}\sum_{jl}\partial_{x_j}(h)\partial_{x_l}(h)\xi_j\xi_l\nonumber\\
&+8(fh)^{-4}f|\xi|^{-6}\sum_{jl}\partial_{x_j}(h)\partial_{x_l}(fh)\xi_j\xi_l-4(fh)^{-4}f|\xi|^{-4}\sum_{j}\partial_{x_j}(h)\partial_{x_j}(fh)\nonumber\\
&-4|\xi|^{-6}\sum_{jl}\partial_{x_j}[(fh)^{-3}f]\partial_{x_l}(h)\xi_j\xi_l-4(fh)^{-3}f|\xi|^{-6}\sum_{jl}\partial_{x_j}\partial_{x_l}(h)\xi_j\xi_l\nonumber\\
&+8|\xi|^{-6}\sum_{jl}\partial_{x_j}[(fh)^{-3}]\partial_{x_l}(fh)\xi_j\xi_l+8(fh)^{-3}f|\xi|^{-6}\sum_{jl}\partial_{x_j}\partial_{x_l}(fh)\xi_j\xi_l+|\xi|^{-4}\sum_{j}\partial_{x_j}^2[(fh)^{-2}]\nonumber\\
&+4(fh)^{-4}f|\xi|^{-6}\sum_{j}\partial_{x_j}(h)\xi_jc(d(hf))c(\xi)-4(fh)^{-4}|\xi|^{-6}\sum_{j}\partial_{x_j}(fh)\xi_jc(d(hf))c(\xi)\nonumber\\
&-(fh)^{-4}|\xi|^{-6}c(d(hf))c(\xi)c(d(hf))c(\xi)+2(fh)^{-4}\partial_{x_\mu}(fh)|\xi|^{-4}c(d(hf))\partial_{\xi_\mu}[c(\xi)]\nonumber\\
&+(fh)^{-3}f|\xi|^{-4}\sum_j\partial_{x_j}^2(h)-(fh)^{-4}f|\xi|^{-4}c(d(hf))c(dh)+2\partial_{x_\mu}[(fh)^{-3}]|\xi|^{-6}c(d(hf))c(\xi)\xi_\mu\nonumber\\
&+2(fh)^{-3}|\xi|^{-6}\partial_{x_\mu}[c(d(hf))]c(\xi)\xi_\mu+2(fh)^{-3}|\xi|^{-6}c(d(hf))\partial_{x_\mu}[c(\xi)]\xi_\mu.
\end{align}
\end{lem}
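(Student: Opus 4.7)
The plan is to exploit the relation $Q\circ Q^{-1}=\mathrm{Id}$. Writing $\sigma^Q\sim\sigma_2^Q+\sigma_1^Q+\sigma_0^Q$ (known from Lemma \ref{lemaa3}) and $\sigma^{Q^{-1}}\sim\sigma_{-2}^{Q^{-1}}+\sigma_{-3}^{Q^{-1}}+\sigma_{-4}^{Q^{-1}}+\cdots$, I insert both expansions into the composition formula (\ref{1111}) and equate each homogeneous piece of $\sigma^{QQ^{-1}}$ to the corresponding piece of the identity. Because the leading $\xi$-degree contribution to the piece of degree $-k$ is $\sigma_2^Q\cdot\sigma_{-2-k}^{Q^{-1}}$, each unknown $\sigma_{-2-k}^{Q^{-1}}$ is recursively determined by symbols of strictly higher degree.

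At degree $0$, $\sigma_2^Q\,\sigma_{-2}^{Q^{-1}}=1$ yields $\sigma_{-2}^{Q^{-1}}=(fh)^{-2}|\xi|^{-2}$. At degree $-1$ the equation
\[
\sigma_2^Q\sigma_{-3}^{Q^{-1}}+\sigma_1^Q\sigma_{-2}^{Q^{-1}}-i\sum_\mu\partial_{\xi_\mu}\sigma_2^Q\cdot\partial_{x_\mu}\sigma_{-2}^{Q^{-1}}=0
\]
is solved for $\sigma_{-3}^{Q^{-1}}$ by dividing through by $(fh)^2|\xi|^2$; the first three stated terms correspond to the three summands of $\sigma_1^Q$ in Lemma \ref{lemaa3}, while the remaining two arise from differentiating the factors $(fh)^{-2}$ and $|\xi|^{-2}$ of $\sigma_{-2}^{Q^{-1}}$ in the cross-derivative term.

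At degree $-2$ the equation
\[
\sigma_2^Q\sigma_{-4}^{Q^{-1}}+\sigma_1^Q\sigma_{-3}^{Q^{-1}}+\sigma_0^Q\sigma_{-2}^{Q^{-1}}-i\sum_\mu\bigl(\partial_{\xi_\mu}\sigma_2^Q\,\partial_{x_\mu}\sigma_{-3}^{Q^{-1}}+\partial_{\xi_\mu}\sigma_1^Q\,\partial_{x_\mu}\sigma_{-2}^{Q^{-1}}\bigr)-\tfrac12\sum_{\mu,\nu}\partial_{\xi_\mu}\partial_{\xi_\nu}\sigma_2^Q\,\partial_{x_\mu}\partial_{x_\nu}\sigma_{-2}^{Q^{-1}}=0
\]
is isolated for $\sigma_{-4}^{Q^{-1}}$ and evaluated at $x_0$ in normal coordinates, where $g^{\mu\nu}(x_0)=\delta^{\mu\nu}$, $\partial_{x_\gamma}g^{\mu\nu}(x_0)=0$, $\Gamma^\alpha_{\mu\nu}(x_0)=\sigma_\alpha(x_0)=0$, and $\partial_{x_\alpha}\partial_{x_\beta}g^{\mu\nu}(x_0)$ is expressed through the Riemann tensor. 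In this gauge $\sigma_0^Q\sigma_{-2}^{Q^{-1}}$ collapses to the scalar-curvature piece $-\tfrac14(fh)^{-2}|\xi|^{-4}s$ together with the $f,h$-derivative terms, while the second-order $\xi$-$x$ cross term produces the $\tfrac{2}{3}(fh)^{-2}|\xi|^{-6}R_{\alpha a\alpha\mu}\xi_a\xi_\mu$ contribution.

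The main obstacle is purely combinatorial in Step 3: of order fifteen resulting monomials appear, and I must (i) regroup derivatives of $f$, $h$, $fh$ coming from $\partial_x\sigma_{-3}^{Q^{-1}}$ into the Leibniz-compact forms $\partial_{x_j}[(fh)^{-3}]$ and $\partial_{x_j}[(fh)^{-3}f]$ that appear in the statement; (ii) keep the Clifford expressions $c(d(hf))c(\xi)$ and the composite $c(d(hf))c(\xi)c(d(hf))c(\xi)$ in unreduced form, as recorded by the lemma, since they must survive to the boundary computation; and (iii) carefully track the signs generated by the factors $(-i)^{|\alpha|}$ together with those appearing when $c(\xi)^2=-|\xi|^2$ is implicitly used. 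Beyond this bookkeeping, there is no conceptual obstacle.
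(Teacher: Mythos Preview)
Your proposal is correct and follows essentially the same approach as the paper: write $\sigma^{QQ^{-1}}=1$ via the composition formula (\ref{1111}), extract the three recursive relations at degrees $0$, $-1$, $-2$, and solve for $\sigma_{-2}^{Q^{-1}}$, $\sigma_{-3}^{Q^{-1}}$, $\sigma_{-4}^{Q^{-1}}$ using Lemma \ref{lemaa3} (with normal coordinates at $x_0$ for the last). Your write-up is in fact more explicit than the paper's, which simply records the three equations and states that substituting Lemma \ref{lemaa3} yields the result.
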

\begin{proof}
Write
\begin{align}
\sigma^Q&=\sigma_2+\sigma_1+\sigma_0;\nonumber\\
\sigma^{Q^{-1}}&=\sigma_{-2}^{Q^{-1}}+\sigma_{-3}^{Q^{-1}}+\sigma_{-4}^{Q^{-1}}+\cdot\cdot\cdot\nonumber\\
&=b_{-2}+b_{-3}+b_{-4}++\cdot\cdot\cdot.
\end{align}
Then by (\ref{1111}), we obtain
\begin{align}\label{poi}
&\sigma_2b_{-2}=I;\nonumber\\
&\sigma_1b_{-2}+\sigma_2b_{-3}-i\partial_{\xi_\mu}(\sigma_2)\partial_{x_\mu}(b_{-2})=0;\nonumber\\
&\sigma_2b_{-4}+\sigma_1b_{-3}+\sigma_0b_{-2}-i\partial_{\xi_\mu}(\sigma_1)\partial_{x_\mu}(b_{-2})-i\partial_{\xi_\mu}(\sigma_2)\partial_{x_\mu}(b_{-3})-\frac{1}{2}\partial_{\xi_\mu}\partial_{\xi_\nu}(\sigma_2)\partial_{x_\mu}\partial_{x_\nu}(b_{-2})=0.
\end{align}
Therefore, put the result of Lemma \ref{lemaa3} into (\ref{poi}), Lemma \ref{qwe} holds.
\end{proof}
Based on the noncommutative residue \cite{Wo1} and \cite{Ka}, using the definition of the residue:
\begin{align}\label{666}
{\rm Wres}[Q^{-\frac{n}{2}+1}]:=\int_{S^*M}{\rm tr}[\sigma_{-n}({Q^{-\frac{n}{2}+1}})](x,\xi),
\end{align}
where $\sigma_{-n}({Q^{-\frac{n}{2}+1}})$ denotes the ($-n$)th order piece of the complete symbols of ${Q^{-\frac{n}{2}+1}}$, {\rm tr} as shorthand of trace.

Let $n=2m$, by (\ref{666}), we need to compute $\int_{S^*M}{\rm tr}[\sigma_{-2m}(Q^{-m+1})](x,\xi).$ Based on the algorithm yielding the principal
symbol of a product of pseudo-differential operators in terms of the principal symbols of the factors, we have the following lemma
\begin{lem}\label{999}The symbol about the conformal Dirac operator $fDh$ is given
\begin{align}
\sigma_{-6}^{Q^{-2}}(x_0,\xi)&=3\sigma_2^{-1}\sigma_{-4}^{Q^{-1}}+\sigma_2^{-3}\sigma_0-4(fh)^4\sigma_2^{-3}|\xi|^{-2}\partial_{x_a}\partial_{x_b}[(fh)^{-2}]\xi_a\xi_b+\sigma_{-3}^{Q^{-1}}\sigma_{-3}^{Q^{-1}}\nonumber\\
&+\sigma_2^{-2}\sigma_1\sigma_{-3}^{Q^{-1}}-i\sigma_2^{-2}\partial_{\xi_\mu}(\sigma_1)\partial_{x_\mu}(\sigma_2^{-1})-i\partial_{\xi_\mu}(\sigma_{-3}^{Q^{-1}})\partial_{x_\mu}(\sigma_2^{-1}).
\end{align}
\end{lem}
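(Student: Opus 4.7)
The plan is to apply the composition formula (\ref{1111}) to $Q^{-2}=Q^{-1}\circ Q^{-1}$ and then reshape one family of summands using the inversion recursion (\ref{poi}) established in the proof of Lemma \ref{qwe}. Collecting all triples $(j_1,j_2,|\alpha|)$ with $j_1,j_2\geq 2$ and $j_1+j_2+|\alpha|=6$, the raw expansion of $\sigma_{-6}^{Q^{-2}}$ consists of the six terms
\begin{align*}
&\sigma_{-2}^{Q^{-1}}\sigma_{-4}^{Q^{-1}}+\sigma_{-3}^{Q^{-1}}\sigma_{-3}^{Q^{-1}}+\sigma_{-4}^{Q^{-1}}\sigma_{-2}^{Q^{-1}}\\
&\quad{}-i\partial_{\xi_\mu}\sigma_{-2}^{Q^{-1}}\partial_{x_\mu}\sigma_{-3}^{Q^{-1}}-i\partial_{\xi_\mu}\sigma_{-3}^{Q^{-1}}\partial_{x_\mu}\sigma_{-2}^{Q^{-1}}-\tfrac{1}{2}\partial_{\xi_\mu}\partial_{\xi_\nu}\sigma_{-2}^{Q^{-1}}\partial_{x_\mu}\partial_{x_\nu}\sigma_{-2}^{Q^{-1}}.
\end{align*}
Since $\sigma_{-2}^{Q^{-1}}=\sigma_2^{-1}=(fh)^{-2}|\xi|^{-2}$ is scalar-valued, the first and third summands commute and sum to $2\sigma_2^{-1}\sigma_{-4}^{Q^{-1}}$.

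The key move is then to multiply the third identity of (\ref{poi}) on the left by $\sigma_2^{-1}$, which yields the closed-form substitution
\begin{align*}
\sigma_2^{-1}\sigma_{-4}^{Q^{-1}}={}&-\sigma_2^{-2}\sigma_1\sigma_{-3}^{Q^{-1}}-\sigma_2^{-3}\sigma_0+i\sigma_2^{-2}\partial_{\xi_\mu}\sigma_1\partial_{x_\mu}\sigma_2^{-1}\\
&+i\sigma_2^{-2}\partial_{\xi_\mu}\sigma_2\partial_{x_\mu}\sigma_{-3}^{Q^{-1}}+\tfrac{1}{2}\sigma_2^{-2}\partial_{\xi_\mu}\partial_{\xi_\nu}\sigma_2\partial_{x_\mu}\partial_{x_\nu}\sigma_2^{-1}.
\end{align*}
Expanding exactly one of the two copies of $\sigma_2^{-1}\sigma_{-4}^{Q^{-1}}$ via this substitution boosts the visible coefficient from $2$ to $3$ and produces the summands $\sigma_2^{-3}\sigma_0$, $\sigma_2^{-2}\sigma_1\sigma_{-3}^{Q^{-1}}$, and $-i\sigma_2^{-2}\partial_{\xi_\mu}\sigma_1\partial_{x_\mu}\sigma_2^{-1}$ demanded by the statement, together with two leftover derivative terms that still need to be reconciled against the raw first- and second-order pieces.

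The reconciliation proceeds in two short computations. Using $\partial_{\xi_\mu}\sigma_2^{-1}=-\sigma_2^{-2}\partial_{\xi_\mu}\sigma_2$, the first-order leftover $-i\sigma_2^{-2}\partial_{\xi_\mu}\sigma_2\partial_{x_\mu}\sigma_{-3}^{Q^{-1}}$ is the exact negative of the raw $-i\partial_{\xi_\mu}\sigma_2^{-1}\partial_{x_\mu}\sigma_{-3}^{Q^{-1}}$, so these cancel, leaving only $-i\partial_{\xi_\mu}\sigma_{-3}^{Q^{-1}}\partial_{x_\mu}\sigma_2^{-1}$ from the original expansion. For the second-order pieces I will compute $\partial_{\xi_\mu}\partial_{\xi_\nu}\sigma_2=2(fh)^2\delta_{\mu\nu}$ and $\partial_{\xi_\mu}\partial_{\xi_\nu}\sigma_2^{-1}=8(fh)^{-2}|\xi|^{-6}\xi_\mu\xi_\nu-2(fh)^{-2}|\xi|^{-4}\delta_{\mu\nu}$, contract each against $\partial_{x_\mu}\partial_{x_\nu}\sigma_2^{-1}=|\xi|^{-2}\partial_{x_\mu}\partial_{x_\nu}[(fh)^{-2}]$, and observe that the two ``diagonal'' contributions $\mp(fh)^{-2}|\xi|^{-6}\sum_\mu\partial_{x_\mu}^2[(fh)^{-2}]$ they generate cancel, leaving exactly $-4(fh)^{-2}|\xi|^{-8}\partial_{x_a}\partial_{x_b}[(fh)^{-2}]\xi_a\xi_b=-4(fh)^4\sigma_2^{-3}|\xi|^{-2}\partial_{x_a}\partial_{x_b}[(fh)^{-2}]\xi_a\xi_b$, which is precisely the stated term. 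The main technical obstacle I anticipate is keeping the signs and the $\sigma_2^{\pm1}$-powers consistent across these reshufflings, since $\sigma_{-3}^{Q^{-1}}$ and $\sigma_{-4}^{Q^{-1}}$ are Clifford-matrix-valued and do not commute freely with $\sigma_1$; the scalar nature of $\sigma_2^{\pm 1}$ is what legitimizes each individual rewrite.
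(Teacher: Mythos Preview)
Your overall strategy coincides with the paper's: expand $\sigma_{-6}^{Q^{-2}}$ via the composition formula for $Q^{-1}\circ Q^{-1}$, then feed in the third inversion identity from (\ref{poi}) and simplify the second-order pieces using the explicit Hessians of $\sigma_2^{\pm 1}$. The second-order cancellation you describe (the ``diagonal'' $\delta_{\mu\nu}$-parts killing each other, leaving only the $\xi_a\xi_b$ contraction) is exactly how the paper reaches the $-4(fh)^4\sigma_2^{-3}|\xi|^{-2}\partial_{x_a}\partial_{x_b}[(fh)^{-2}]\xi_a\xi_b$ term.

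The one genuine slip is in the narration of the key substitution step. If you literally replace one of the two copies of $\sigma_2^{-1}\sigma_{-4}^{Q^{-1}}$ by the right-hand side of your displayed identity, the coefficient drops from $2$ to $1$ (not $3$), and every summand you list appears with the \emph{opposite} sign; moreover the leftover first-order piece would then be $+i\sigma_2^{-2}\partial_{\xi_\mu}\sigma_2\,\partial_{x_\mu}\sigma_{-3}^{Q^{-1}}$, which is \emph{equal} to the raw $-i\partial_{\xi_\mu}\sigma_2^{-1}\,\partial_{x_\mu}\sigma_{-3}^{Q^{-1}}$ rather than its negative, so they would double instead of cancel. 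What actually works---and what the paper does in (\ref{yyy})---is the reverse direction: rewrite the raw term
\[
-i\partial_{\xi_\mu}\sigma_2^{-1}\,\partial_{x_\mu}\sigma_{-3}^{Q^{-1}}
= i\sigma_2^{-2}\partial_{\xi_\mu}\sigma_2\,\partial_{x_\mu}\sigma_{-3}^{Q^{-1}}
= \sigma_2^{-1}\sigma_{-4}^{Q^{-1}}+\sigma_2^{-2}\sigma_1\sigma_{-3}^{Q^{-1}}+\sigma_2^{-3}\sigma_0
-i\sigma_2^{-2}\partial_{\xi_\mu}\sigma_1\,\partial_{x_\mu}\sigma_2^{-1}
-\tfrac{1}{2}\sigma_2^{-2}\partial_{\xi_\mu}\partial_{\xi_\nu}\sigma_2\,\partial_{x_\mu}\partial_{x_\nu}\sigma_2^{-1},
\]
obtained by solving (\ref{poi}) for the $\partial_{\xi_\mu}\sigma_2\,\partial_{x_\mu}b_{-3}$ term. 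This is what genuinely bumps $2$ up to $3$, delivers $\sigma_2^{-3}\sigma_0$, $\sigma_2^{-2}\sigma_1\sigma_{-3}^{Q^{-1}}$, $-i\sigma_2^{-2}\partial_{\xi_\mu}\sigma_1\,\partial_{x_\mu}\sigma_2^{-1}$ with the right signs, and leaves only $-\tfrac{1}{2}\sigma_2^{-2}\partial_{\xi_\mu}\partial_{\xi_\nu}\sigma_2\,\partial_{x_\mu}\partial_{x_\nu}\sigma_2^{-1}$ to combine with the raw $-\tfrac{1}{2}\partial_{\xi_\mu}\partial_{\xi_\nu}\sigma_2^{-1}\,\partial_{x_\mu}\partial_{x_\nu}\sigma_2^{-1}$ as you then correctly handle. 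Once you restate the substitution this way, your proof is the paper's proof.

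A smaller point: at $x_0$ the paper records $\partial_{x_\mu}\partial_{x_\nu}\sigma_2^{-1}(x_0)=|\xi|^{-2}\partial_{x_\mu}\partial_{x_\nu}[(fh)^{-2}]-(fh)^{-2}|\xi|^{-4}\partial_{x_\mu}\partial_{x_\nu}g^{jl}(x_0)\xi_j\xi_l$, whereas you keep only the first summand. Since the lemma's final formula also omits the metric-curvature piece, your simplification is consistent with the paper's stated result, but you should be aware of that omission.
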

\begin{proof}
Firstly, using $\sigma^{Q^{-m}}_{-2m}\equiv\sigma_2^{-m}$, we get the recursion relations
\begin{align}\label{AA2}
\sigma^{Q^{-m+1}}_{-2m}(x,\xi)&=\sum_{|\alpha|=0}^2\sum_{k=2}^{4-|\alpha|}(-i)^{|\alpha|}\frac{1}{\alpha!}\partial_{\xi}^\alpha\sigma^{Q^{-m+2}}_{|\alpha|+k-2m}\partial_{x}^\alpha\sigma^{Q^{-1}}_{-k}\nonumber\\
&=\sigma^{Q^{-m+2}}_{2-2m}\sigma_2^{-1}+\sigma^{Q^{-m+2}}_{3-2m}\sigma^{Q^{-1}}_{-3}+\sigma^{-m+2}_{2}\sigma^{Q^{-1}}_{-4}\nonumber\\
&-i\partial_{\xi_\mu}\sigma^{Q^{-m+2}}_{3-2m}\partial_{x_\mu}(\sigma_2^{-1})-i\partial_{\xi_\mu}(\sigma^{-m+2}_{2})\partial_{x_\mu}(\sigma_{-3}^{Q^{-1}})\nonumber\\
&-\frac{1}{2}\partial_{\xi_\mu}\partial_{\xi_\nu}(\sigma^{-m+2}_{2})\partial_{x_\mu}\partial_{x_\nu}(\sigma_2^{-1}).
\end{align}
Let $m=3,$ we have
\begin{align}\label{AA52}
\sigma_{-6}^{Q^{-2}}(x_0,\xi)&=\sigma_{-4}^{Q^{-1}}\sigma_2^{-1}+\sigma_{-3}^{Q^{-1}}\sigma_{-3}^{Q^{-1}}+\sigma_2^{-1}\sigma_{-4}^{Q^{-1}}-i\partial_{\xi_\mu}(\sigma_{-3}^{Q^{-1}})\partial_{x_\mu}(\sigma_2^{-1})-i\partial_{\xi_\mu}(\sigma_2^{-1})\partial_{x_\mu}(\sigma_{-3}^{Q^{-1}})\nonumber\\
&-\frac{1}{2}\partial_{\xi_\mu}\partial_{\xi_\nu}(\sigma^{-1}_{2})\partial_{x_\mu}\partial_{x_\nu}(\sigma_2^{-1}).
\end{align}
By (\ref{poi}), we get
\begin{align}\label{yyy}
&-i\partial_{\xi_\mu}(\sigma_2^{-1})\partial_{x_\mu}(\sigma_{-3}^{Q^{-1}})\nonumber\\
&=i\sigma_2^{-2}\partial_{\xi_\mu}(\sigma_2)\partial_{x_\mu}(\sigma_{-3}^{Q^{-1}})\nonumber\\
&=\sigma_2^{-2}[\sigma_{-4}^{Q^{-1}}\sigma_2+\sigma_1\sigma_{-3}^{Q^{-1}}+\sigma_0\sigma_2^{-1}-i\partial_{\xi_\mu}(\sigma_1)\partial_{x_\mu}(\sigma_2^{-1})-\frac{1}{2}\partial_{\xi_\mu}\partial_{\xi_\nu}(\sigma_2)\partial_{x_\mu}\partial_{x_\nu}(\sigma_2^{-1})]\nonumber\\
&=\sigma_2^{-1}\sigma_{-4}^{Q^{-1}}+\sigma_2^{-2}\sigma_1\sigma_{-3}^{Q^{-1}}+\sigma_0\sigma_{2}^{-3}-i\sigma_2^{-2}\partial_{\xi_\mu}(\sigma_1)\partial_{x_\mu}(\sigma_2^{-1})-\frac{1}{2}\sigma_2^{-2}\partial_{\xi_\mu}\partial_{\xi_\nu}(\sigma_2)\partial_{x_\mu}\partial_{x_\nu}(\sigma_2^{-1}).
\end{align}
Substitute the result in (\ref{yyy}) into the (\ref{AA52}), we obtain
\begin{align}\label{ccc}
\sigma_{-6}^{Q^{-2}}(x_0,\xi)&=3\sigma_2^{-1}\sigma_{-4}^{Q^{-1}}+\sigma_2^{-3}\sigma_0+\sigma_{-3}^{Q^{-1}}\sigma_{-3}^{Q^{-1}}+\sigma_2^{-2}\sigma_1\sigma_{-3}^{Q^{-1}}-i\sigma_2^{-2}\partial_{\xi_\mu}(\sigma_1)\partial_{x_\mu}(\sigma_2^{-1})\nonumber\\
&-i\partial_{\xi_\mu}(\sigma_{-3}^{Q^{-1}})\partial_{x_\mu}(\sigma_2^{-1})-\frac{1}{2}\sigma_2^{-2}\partial_{\xi_\mu}\partial_{\xi_\nu}(\sigma_2)\partial_{x_\mu}\partial_{x_\nu}(\sigma_2^{-1})-\frac{1}{2}\partial_{\xi_\mu}\partial_{\xi_\nu}(\sigma_2^{-1})\partial_{x_\mu}\partial_{x_\nu}(\sigma_2^{-1}).
\end{align}
Using
\begin{align}\label{bbb}
&\partial_{\xi_\mu}\partial_{\xi_\nu}\sigma^{-1}_{2}(x_0)=-\sigma^{-2}_{2}\partial_{\xi_\mu}\partial_{\xi_\nu}\sigma_2+2\sigma_2^{-3}\partial_{\xi_\mu}\sigma_{2}\partial_{\xi_\nu}\sigma_2;\nonumber\\
&\partial_{x_\mu}\partial_{x_\nu}\sigma_2^{-1}(x_0)=|\xi|^{-2}\partial_{x_\mu}\partial_{x_\nu}[(fh)^{-2}]-(fh)^{-2}|\xi|^{-4}\sum_{jl}\partial_{x_\mu}\partial_{x_\nu}g^{jl}(x_0)\xi_j\xi_l,
\end{align}
then substitute the result in (\ref{bbb}) into the (\ref{ccc}), Lemma \ref{999} holds.
\end{proof}
By Lemma \ref{lemaa3}-Lemma \ref{999}, then the following equality holds.
\begin{align}
\sigma_{-6}^{Q^{-2}}(x_0,\xi)
&=-\frac{1}{2}(fh)^{-4}|\xi|^{-6}s+2(fh)^{-4}|\xi|^{-8}R_{\alpha a\alpha\mu}(x_0)\xi_a\xi_\mu-12(fh)^{-6}f^2|\xi|^{-8}\sum_{jl}\partial_{x_j}(h)\partial_{x_l}(h)\xi_j\xi_l\nonumber\\
&+44(fh)^{-6}f|\xi|^{-8}\sum_{jl}\partial_{x_j}(h)\partial_{x_l}(fh)\xi_j\xi_l-10(fh)^{-6}f|\xi|^{-6}\sum_{j}\partial_{x_j}(h)\partial_{x_j}(fh)\nonumber\\
&-12(fh)^{-2}|\xi|^{-8}\sum_{jl}\partial_{x_j}[(fh)^{-3}f]\partial_{x_l}(h)\xi_j\xi_l-12(fh)^{-5}f|\xi|^{-8}\sum_{jl}\partial_{x_j}\partial_{x_l}(h)\xi_j\xi_l\nonumber\\
&+24(fh)^{-2}|\xi|^{-8}\sum_{jl}\partial_{x_j}[(fh)^{-3}]\partial_{x_l}(fh)\xi_j\xi_l+24(fh)^{-5}|\xi|^{-8}\sum_{jl}\partial_{x_j}\partial_{x_l}(fh)\xi_j\xi_l\nonumber\\
&+3(fh)^{-2}|\xi|^{-6}\sum_{j}\partial_{x_j}^2[(fh)^{-2}]+14(fh)^{-6}f|\xi|^{-8}\sum_{j}\partial_{x_j}(h)\xi_jc(d(hf))c(\xi)\nonumber\\
&-28(fh)^{-6}|\xi|^{-8}\sum_{j}\partial_{x_j}(fh)\xi_jc(d(hf))c(\xi)-4(fh)^{-6}|\xi|^{-8}c(d(hf))c(\xi)c(d(hf))c(\xi)\nonumber\\
&+6(fh)^{-6}|\xi|^{-6}\sum_\mu\partial_{x_\mu}(fh)c(d(hf))\partial_{\xi_\mu}[c(\xi)]+2(fh)^{-5}f|\xi|^{-6}\sum_{j}\partial_{x_j}^2(h)\nonumber\\
&-2(fh)^{-6}f|\xi|^{-6}c(d(hf))c(dh)+2(fh)^{-2}|\xi|^{-8}\sum_{jl}\partial_{x_j}\partial_{x_l}[(fh)^{-2}]\xi_j\xi_l\nonumber\\
&-42(fh)^{-6}|\xi|^{-8}\sum_{jl}\partial_{x_j}(fh)\partial_{x_l}(fh)\xi_j\xi_l+6(fh)^{-5}|\xi|^{-8}\sum_{j}c(d(hf))\partial_{x_j}[c(\xi)]\xi_j\nonumber\\
&+8(fh)^{-6}|\xi|^{-6}\sum_j\partial_{x_j}(fh)\partial_{x_j}(fh)+6(fh)^{-2}|\xi|^{-8}]\sum_{j}\partial_{x_j}[(fh)^{-3}]\xi_jc(d(hf))c(\xi).
\end{align}
 \section{A Kastler-Kalau-Walze type theorem on 6-dimensional manifolds without boundary}
\label{section:3}

In the section, we prove the Kastler-Kalau-Walze type theorem for the rescaled Dirac operator on 6-dimensional manifolds without boundary. Fistly, we review here technical tool of the computation, which are the integrals of polynomial functions over the unit spheres. By (32) in \cite{B1}, we define
\begin{align}
I_{S_n}^{\gamma_1\cdot\cdot\cdot\gamma_{2\bar{n}+2}}=\int_{|x|=1}d^nxx^{\gamma_1}\cdot\cdot\cdot x^{\gamma_{2\bar{n}+2}},
\end{align}
i.e. the monomial integrals over a unit sphere.
Then by Proposition A.2. in \cite{B1},  polynomial integrals over higher spheres in the $n$-dimesional case are given
\begin{align}
I_{S_n}^{\gamma_1\cdot\cdot\cdot\gamma_{2\bar{n}+2}}=\frac{1}{2\bar{n}+n}[\delta^{\gamma_1\gamma_2}I_{S_n}^{\gamma_3\cdot\cdot\cdot\gamma_{2\bar{n}+2}}+\cdot\cdot\cdot+\delta^{\gamma_1\gamma_{2\bar{n}+1}}I_{S_n}^{\gamma_2\cdot\cdot\cdot\gamma_{2\bar{n}+1}}],
\end{align}
where $S_n\equiv S^{n-1}$ in $\mathbb{R}^n$.

For $\bar{n}=0$, we have $I^0$=area$(S_n)$=$\frac{2\pi^{\frac{n}{2}}}{\Gamma(\frac{n}{2})}$, we immediately get
\begin{align}
I_{S_n}^{\gamma_1\gamma_2}&=\frac{1}{n}area(S_n)\delta^{\gamma_1\gamma_2};\nonumber\\
I_{S_n}^{\gamma_1\gamma_2\gamma_3\gamma_4}&=\frac{1}{n(n+2)}area(S_n)[\delta^{\gamma_1\gamma_2}\delta^{\gamma_3\gamma_4}+\delta^{\gamma_1\gamma_3}\delta^{\gamma_2\gamma_4}+\delta^{\gamma_1\gamma_4}\delta^{\gamma_2\gamma_3}];\nonumber\\
I_{S_n}^{\gamma_1\gamma_2\gamma_3\gamma_4\gamma_5\gamma_6}&=\frac{1}{n(n+2)(n+4)}area(S_n)[\delta^{\gamma_1\gamma_2}(\delta^{\gamma_3\gamma_4}\delta^{\gamma_5\gamma_6}+\delta^{\gamma_3\gamma_5}\delta^{\gamma_4\gamma_6}+\delta^{\gamma_3\gamma_6}\delta^{\gamma_4\gamma_5})\nonumber\\
&+\delta^{\gamma_1\gamma_3}(\delta^{\gamma_2\gamma_4}\delta^{\gamma_5\gamma_6}+\delta^{\gamma_2\gamma_5}\delta^{\gamma_4\gamma_6}+\delta^{\gamma_2\gamma_6}\delta^{\gamma_4\gamma_5})+\delta^{\gamma_1\gamma_4}(\delta^{\gamma_2\gamma_3}\delta^{\gamma_5\gamma_6}+\delta^{\gamma_2\gamma_5}\delta^{\gamma_3\gamma_6}+\delta^{\gamma_2\gamma_6}\delta^{\gamma_3\gamma_5})\nonumber\\
&+\delta^{\gamma_1\gamma_5}(\delta^{\gamma_2\gamma_3}\delta^{\gamma_4\gamma_6}+\delta^{\gamma_2\gamma_4}\delta^{\gamma_3\gamma_6}+\delta^{\gamma_2\gamma_6}\delta^{\gamma_3\gamma_4})+\delta^{\gamma_1\gamma_6}(\delta^{\gamma_2\gamma_3}\delta^{\gamma_4\gamma_5}+\delta^{\gamma_2\gamma_4}\delta^{\gamma_3\gamma_5}+\delta^{\gamma_2\gamma_5}\delta^{\gamma_3\gamma_4})].
\end{align}
Then, we calculate each term of  $\int_{|\xi|=1}{\rm tr}[\sigma_{-6}^{Q^{-2}}(x_0,\xi)]\sigma(\xi)$ separately.\\
$\mathbf{(1)}$
\begin{align*}
{\rm tr}\bigg(-\frac{1}{2}(fh)^{-4}|\xi|^{-6}s\bigg)|_{|\xi|=1}&=-\frac{1}{2}(fh)^{-4}s{\rm tr}[id],\nonumber\\
\end{align*}
then
\begin{align*}
\int_{|\xi|=1}{\rm tr}\bigg(-\frac{1}{2}(fh)^{-4}s{\rm tr}[id]\bigg)\sigma(\xi)&=-\frac{1}{2}(fh)^{-4}s{\rm tr}[id]area(S_6).
\end{align*}
$\mathbf{(2)}$
\begin{align*}
{\rm tr}\bigg(2(fh)^{-4}|\xi|^{-8}R_{\alpha a\alpha\mu}(x_0)\xi_a\xi_\mu\bigg)|_{|\xi|=1}&=2(fh)^{-4}R_{\alpha a\alpha\mu}(x_0)\xi_a\xi_\mu{\rm tr}[id],
\end{align*}
then
\begin{align*}
\int_{|\xi|=1}{\rm tr}\bigg(2(fh)^{-4}R_{\alpha a\alpha\mu}(x_0)\xi_a\xi_\mu{\rm tr}[id]\bigg)\sigma(\xi)&=2(fh)^{-4}R_{\alpha a\alpha\mu}(x_0)\int_{|\xi|=1}\xi_a\xi_\mu\sigma(\xi){\rm tr}[id]\nonumber\\
&=2(fh)^{-4}s\times\frac{1}{6}area(S_6){\rm tr}[id]\nonumber\\
&=\frac{1}{3}(fh)^{-4}s{\rm tr}[id]area(S_6).
\end{align*}
$\mathbf{(3)}$
\begin{align*}
{\rm tr}\bigg(-12(fh)^{-6}f^2|\xi|^{-8}\sum_{jl}\partial_{x_j}(h)\partial_{x_l}(h)\xi_j\xi_l\bigg)|_{|\xi|=1}&=12(fh)^{-6}f^2\sum_{jl}\partial_{x_j}(h)\partial_{x_l}(h)\xi_j\xi_l{\rm tr}[id],
\end{align*}
then
\begin{align*}
&\int_{|\xi|=1}{\rm tr}\bigg(-12(fh)^{-6}f^2\sum_{jl}\partial_{x_j}(h)\partial_{x_l}(h)\xi_j\xi_l{\rm tr}[id]\bigg)\sigma(\xi)\nonumber\\
&=-12(fh)^{-6}f^2\sum_{jl}\partial_{x_j}(h)\partial_{x_l}(h)\int_{|\xi|=1}\xi_j\xi_l\sigma(\xi){\rm tr}[id]\nonumber\\
&=-2(fh)^{-6}f^2\sum_{j}\partial_{x_j}(h)\partial_{x_j}(h){\rm tr}[id]area(S_6)\nonumber\\
&=-2(fh)^{-6}f^2g\Big(\nabla(h),\nabla(h)\Big){\rm tr}[id]area(S_6)\nonumber\\
&=-2(fh)^{-6}f^2|\nabla(h)|^2{\rm tr}[id]area(S_6),
\end{align*}
where $\nabla(h)$ denotes the gradient of $h$.\\
$\mathbf{(4)}$
Similar to $\mathbf{(3)}$, we have
\begin{align*}
\int_{|\xi|=1}{\rm tr}\bigg(44(fh)^{-6}f|\xi|^{-8}\sum_{jl}\partial_{x_j}(h)\partial_{x_l}(fh)\xi_j\xi_l\bigg)\sigma(\xi)&=\frac{22}{3}(fh)^{-6}fg\Big(\nabla(h),\nabla(fh)\Big){\rm tr}[id]area(S_6).
\end{align*}
$\mathbf{(5)}$
Similar to $\mathbf{(3)}$, we have
\begin{align*}
\int_{|\xi|=1}{\rm tr}\bigg(-10(fh)^{-6}f|\xi|^{-6}\sum_{j}\partial_{x_j}(h)\partial_{x_j}(fh)\bigg)\sigma(\xi)&=-10(fh)^{-6}fg\Big(\nabla(h),\nabla(fh)\Big){\rm tr}[id]area(S_6).
\end{align*}
$\mathbf{(6)}$
Similar to $\mathbf{(3)}$, we have
\begin{align*}
\int_{|\xi|=1}{\rm tr}\bigg(-12(fh)^{-2}|\xi|^{-8}\sum_{jl}\partial_{x_j}[(fh)^{-3}f]\partial_{x_l}(h)\xi_j\xi_l\bigg)\sigma(\xi)&=-2(fh)^{-2}g\Big(\nabla[(fh)^{-3}f],\nabla(h)\Big){\rm tr}[id]area(S_6).
\end{align*}
$\mathbf{(7)}$
\begin{align*}
{\rm tr}\bigg(-12(fh)^{-5}f|\xi|^{-8}\sum_{jl}\partial_{x_j}\partial_{x_l}(h)\xi_j\xi_l
\bigg)|_{|\xi|=1}&=-12(fh)^{-5}f\sum_{jl}\partial_{x_j}\partial_{x_l}(h)\xi_j\xi_l{\rm tr}[id],
\end{align*}
then
\begin{align*}
&\int_{|\xi|=1}{\rm tr}\bigg(-12(fh)^{-5}f\sum_{jl}\partial_{x_j}\partial_{x_l}(h)\xi_j\xi_l{\rm tr}[id]\bigg)\sigma(\xi)\nonumber\\
&=-12(fh)^{-5}f\sum_{jl}\partial_{x_j}\partial_{x_l}(h)\int_{|\xi|=1}\xi_j\xi_l\sigma(\xi){\rm tr}[id]\nonumber\\
&=-2(fh)^{-5}f\sum_{j}\partial_{x_j}^2(h){\rm tr}[id]area(S_6)\nonumber\\
&=-2(fh)^{-5}f\Delta(h){\rm tr}[id]area(S_6),
\end{align*}
where $\Delta(h)$ denotes a generalized laplacian of $h$.\\
$\mathbf{(8)}$
Similar to $\mathbf{(3)}$, we have
\begin{align*}
\int_{|\xi|=1}{\rm tr}\bigg(24(fh)^{-2}|\xi|^{-8}\sum_{jl}\partial_{x_j}[(fh)^{-3}]\partial_{x_l}(fh)\xi_j\xi_l\bigg)\sigma(\xi)&=4(fh)^{-2}fg\Big(\nabla[(fh)^{-3}],\nabla(h)\Big){\rm tr}[id]area(S_6).
\end{align*}
$\mathbf{(9)}$
Similar to $\mathbf{(7)}$, we have
\begin{align*}
&\int_{|\xi|=1}{\rm tr}\bigg(24(fh)^{-5}|\xi|^{-8}\sum_{jl}\partial_{x_j}\partial_{x_l}(fh)\xi_j\xi_l\bigg)\sigma(\xi)=4(fh)^{-5}\Delta(fh){\rm tr}[id]area(S_6).
\end{align*}
$\mathbf{(10)}$
Similar to $\mathbf{(7)}$, we have
\begin{align*}
&\int_{|\xi|=1}{\rm tr}\bigg(3(fh)^{-2}|\xi|^{-6}\sum_{jl}\partial_{x_j}^2[(fh)^{-2}]\bigg)\sigma(\xi)=3(fh)^{-2}\Delta[(fh)^{-2}]{\rm tr}[id]area(S_6).
\end{align*}
$\mathbf{(11)}$
By $c(\xi)=\sum_j\xi_jc(dx_j)$ and $c(d(hf))=c(\sum_j\partial_{x_j}(hf)dx_j)=\sum_j\partial_{x_j}(hf)c(dx_j)$, we have
\begin{align*}
{\rm tr}\bigg(14(fh)^{-6}f|\xi|^{-8}\sum_{j}\partial_{x_j}(h)\xi_jc(d(hf))c(\xi)
\bigg)|_{|\xi|=1}&=-14(fh)^{-6}f\sum_{jl}\partial_{x_j}(h)\partial_{x_l}(hf)\xi_j\xi_l{\rm tr}[id],
\end{align*}
then
\begin{align*}
&\int_{|\xi|=1}{\rm tr}\bigg(-14(fh)^{-6}f\sum_{jl}\partial_{x_j}(h)\partial_{x_l}(hf)\xi_j\xi_l{\rm tr}[id]\bigg)\sigma(\xi)\nonumber\\
&=-14(fh)^{-6}f\sum_{jl}\partial_{x_j}(h)\partial_{x_l}(hf)\int_{|\xi|=1}\xi_j\xi_l\sigma(\xi){\rm tr}[id]\nonumber\\
&=-\frac{7}{3}(fh)^{-6}f\sum_{jl}\partial_{x_j}(h)\partial_{x_l}(hf){\rm tr}[id]area(S_6)\nonumber\\
&=-\frac{7}{3}(fh)^{-6}fg\Big(\nabla(h),\nabla(hf)\Big){\rm tr}[id]area(S_6),
\end{align*}
$\mathbf{(12)}$
Similar to $\mathbf{(11)}$, we have
\begin{align*}
&\int_{|\xi|=1}{\rm tr}\bigg(-28(fh)^{-6}|\xi|^{-8}\sum_{j}\partial_{x_j}(fh)\xi_jc(d(hf))c(\xi)\bigg)\sigma(\xi)=\frac{14}{3}(fh)^{-6}|\nabla(fh)|^{2}{\rm tr}[id]area(S_6).
\end{align*}
$\mathbf{(13)}$
\begin{align*}
&{\rm tr}\bigg(-4(fh)^{-6}|\xi|^{-8}c(d(hf))c(\xi)c(d(hf))c(\xi)
\bigg)|_{|\xi|=1}\nonumber\\
&=-4(fh)^{-6}{\rm tr}[c(d(hf))c(\xi)c(d(hf))c(\xi)]\nonumber\\
&=-4(fh)^{-6}\sum_{ijkl}\partial_{x_i}(hf)\partial_{x_k}(hf)\xi_j\xi_l{\rm tr}[c(dx_i)c(dx_j)c(dx_k)c(dx_l)],
\end{align*}
where
\begin{align*}
{\rm tr}[c(dx_i)c(dx_j)c(dx_k)c(dx_l)]=-\delta_{ij}{\rm tr}[c(dx_k)c(dx_l)]+\delta_{ik}{\rm tr}[c(dx_j)c(dx_l)]-\delta_{il}{\rm tr}[c(dx_j)c(dx_k)]\bigg),
\end{align*}
then
\begin{align*}
-4(fh)^{-6}{\rm tr}[c(d(hf))c(\xi)c(d(hf))c(\xi)]&=\Big(\sum_{ik}\partial_{x_i}(hf)\partial_{x_k}(hf)\xi_i\xi_k-\sum_{ij}\partial_{x_i}(hf)\partial_{x_i}(hf)\xi_j\xi_j\nonumber\\
&+\sum_{ij}\partial_{x_i}(hf)\partial_{x_j}(hf)\xi_i\xi_j\Big){\rm tr}[id].
\end{align*}
Further,
\begin{align*}
&\int_{|\xi|=1}{\rm tr}\bigg(-4(fh)^{-6}{\rm tr}[c(d(fh))c(\xi)c(d(fh))c(\xi)]\bigg)\sigma(\xi)\nonumber\\
&=-4(fh)^{-6}\partial_{x_i}(fh)\partial_{x_i}(fh)\times\frac{1}{6}area(S_6){\rm tr}[id]\nonumber\\
&=-\frac{2}{3}(fh)^{-6}|\nabla(fh)|^2{\rm tr}[id]area(S_6).
\end{align*}
$\mathbf{(14)}$
By $\partial_{\xi_\mu}[c(\xi)]=c(dx_\mu)$, we have
\begin{align*}
&{\rm tr}\bigg(6(fh)^{-6}|\xi|^{-6}\sum_\mu\partial_{x_\mu}(fh)c(d(hf))\partial_{\xi_\mu}[c(\xi)]
\bigg)|_{|\xi|=1}\nonumber\\
&=6(fh)^{-6}\sum_\mu\partial_{x_\mu}(fh){\rm tr}[c(d(hf))\partial_{\xi_\mu}[c(\xi)]]\nonumber\\
&=-6(fh)^{-6}\sum_\mu\partial_{x_\mu}(fh)\partial_{x_\mu}(fh){\rm tr}[id],
\end{align*}
then,
\begin{align*}
&\int_{|\xi|=1}{\rm tr}\bigg(-6(fh)^{-6}\sum_\mu\partial_{x_\mu}(fh)\partial_{x_\mu}(fh){\rm tr}[id]\bigg)\sigma(\xi)\nonumber\\
&=-6(fh)^{-6}|\nabla(fh)|^2{\rm tr}[id]area(S_6).
\end{align*}
$\mathbf{(15)}$
Similar to $\mathbf{(7)}$, we have
\begin{align*}
&\int_{|\xi|=1}{\rm tr}\bigg(2(fh)^{-5}f|\xi|^{-6}\sum_{j}\partial_{x_j}^2(h)\bigg)\sigma(\xi)=2(fh)^{-5}f\Delta(h){\rm tr}[id]area(S_6).
\end{align*}
$\mathbf{(16)}$
Similar to $\mathbf{(11)}$, we have
\begin{align*}
&\int_{|\xi|=1}{\rm tr}\bigg(-2(fh)^{-6}f|\xi|^{-6}c(d(hf))c(dh)\bigg)\sigma(\xi)=2(fh)^{-6}fg\Big(\nabla(hf),\nabla(h)\Big){\rm tr}[id]area(S_6).
\end{align*}
$\mathbf{(17)}$
Similar to $\mathbf{(7)}$, we have
\begin{align*}
&\int_{|\xi|=1}{\rm tr}\bigg(-4(fh)^{-2}|\xi|^{-8}\sum_{jl}\partial_{x_j}\partial_{x_l}[(fh)^{-2}]\xi_j\xi_l\bigg)\sigma(\xi)=-\frac{2}{3}(fh)^{-2}\Delta[(fh)^{-2}]{\rm tr}[id]area(S_6).
\end{align*}
$\mathbf{(18)}$
Similar to $\mathbf{(3)}$, we have
\begin{align*}
&\int_{|\xi|=1}{\rm tr}\bigg(-42(fh)^{-6}|\xi|^{-8}\sum_{jl}\partial_{x_j}(fh)\partial_{x_l}(fh)\xi_j\xi_l\bigg)\sigma(\xi)=-7(fh)^{-6}|\nabla(fh)|^2{\rm tr}[id]area(S_6).
\end{align*}
$\mathbf{(19)}$
By $\partial_{x_j}[H_{t\alpha}](x_0)=\partial_{x_j}[c(e_\alpha)]=0,$ we have
\begin{align}
\partial_{x_j}[c(\xi)](x_0)&=\sum_t\xi_t\partial_{x_j}[c(dx_t)](x_0)\nonumber\\
&=\sum_t\xi_t\partial_{x_j}[\sum_\alpha(dx_t,e_\alpha)c(e_\alpha)](x_0)\nonumber\\
&=\sum_{t\alpha}\xi_t\partial_{x_j}[H_{t\alpha}c(e_\alpha)](x_0)\nonumber\\
&=0.
\end{align}
Then
\begin{align*}
&\int_{|\xi|=1}{\rm tr}\bigg(6(fh)^{-5}|\xi|^{-8}\sum_{j}c(d(fh))\partial_{x_j}[c(\xi)]\xi_j\bigg)\sigma(\xi)=0.
\end{align*}
$\mathbf{(20)}$
Similar to $\mathbf{(3)}$, we have
\begin{align*}
&\int_{|\xi|=1}{\rm tr}\bigg(8(fh)^{-6}|\xi|^{-6}\sum_j\partial_{x_j}(fh)\partial_{x_j}(fh)\bigg)\sigma(\xi)=8(fh)^{-6}|\nabla(fh)|^2{\rm tr}[id]area(S_6).
\end{align*}
$\mathbf{(21)}$
Similar to $\mathbf{(11)}$, we have
\begin{align*}
&\int_{|\xi|=1}{\rm tr}\bigg(6(fh)^{-2}|\xi|^{-8}]\sum_{j}\partial_{x_j}[(fh)^{-3}]\xi_jc(d(hf))c(\xi)\bigg)\sigma(\xi)=-(fh)^{-2}g\Big(\nabla[(fh)^{-3}],\nabla(hf)\Big){\rm tr}[id]area(S_6).
\end{align*}
Therefore, we get
\begin{align*}
&\int_{|\xi|=1}{\rm tr}[\sigma_{-6}^{Q^{-2}}(x_0,\xi)]\sigma(\xi)\nonumber\\
&=\bigg(-\frac{1}{6}(fh)^{-4}s-2(fh)^{-6}f^2|\nabla(h)|^2-3(fh)^{-6}fg\Big(\nabla(h),\nabla(fh)\Big)-2(fh)^{-2}g\Big(\nabla[(fh)^{-3}f],\nabla(h)\Big)\nonumber\\
&+4(fh)^{-2}fg\Big(\nabla[(fh)^{-3}],\nabla(h)\Big)+4(fh)^{-5}\Delta(fh)+3(fh)^{-2}\Delta[(fh)^{-2}]-(fh)^{-6}|\nabla(fh)|^{2}\nonumber\\
&-\frac{2}{3}(fh)^{-2}\Delta[(fh)^{-2}]-(fh)^{-2}g\Big(\nabla[(fh)^{-3}],\nabla(hf)\Big)\bigg){\rm tr}[id]area(S_6).
\end{align*}
When $n=6,~~m=3$, we have ${\rm tr}[id]=2^3=8,$ $area(S_6)=\pi^3,$ then we get the following theorem.
\begin{thm}\label{thmooo}
Let $M$ be a $6$-dimensional oriented
compact spin manifold without boundary, $Q=(fDh)^{2}$, then the Kastler-Kalau-Walze type theorem for the rescaled Dirac operator is given
 \begin{align}
&{\rm Wres}(Q^{-2})\nonumber\\
&=8\pi^3\int_M\bigg(-\frac{1}{6}(fh)^{-4}s-2(fh)^{-6}f^2|\nabla(h)|^2-3(fh)^{-6}fg\Big(\nabla(h),\nabla(fh)\Big)-2(fh)^{-2}g\Big(\nabla[(fh)^{-3}f],\nabla(h)\Big)\nonumber\\
&+4(fh)^{-2}fg\Big(\nabla[(fh)^{-3}],\nabla(h)\Big)+4(fh)^{-5}\Delta(fh)+3(fh)^{-2}\Delta[(fh)^{-2}]-(fh)^{-6}|\nabla(fh)|^{2}\nonumber\\
&-\frac{2}{3}(fh)^{-2}\Delta[(fh)^{-2}]-(fh)^{-2}g\Big(\nabla[(fh)^{-3}],\nabla(hf)\Big)\bigg)d{\rm Vol_M}.
 \end{align}
 \end{thm}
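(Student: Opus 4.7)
The plan is to apply the Wodzicki residue trace formula
\begin{align*}
{\rm Wres}(Q^{-2}) = \int_M \int_{|\xi|=1} {\rm tr}\bigl(\sigma_{-6}^{Q^{-2}}(x,\xi)\bigr)\, \sigma(\xi)\, d{\rm Vol}_M,
\end{align*}
which reduces the theorem to a pointwise computation of the $(-6)$-th order symbol of $Q^{-2}$ followed by fiber integration over the cotangent unit sphere and integration over $M$. Because the right-hand side is a coordinate-invariant scalar, I would work in normal coordinates centered at an arbitrary $x_0$, where metric derivatives vanish at the origin and $\partial_{x_j}[c(\xi)](x_0)=0$; this dramatically simplifies the symbol calculus without loss of generality.

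First I would expand $Q=(fDh)^2$ as in (\ref{a2}) and combine Lemma \ref{lemma1} with Lemma \ref{lema2} to obtain the three top symbols $\sigma_2^Q,\sigma_1^Q,\sigma_0^Q$ of Lemma \ref{lemaa3}; the Clifford factor $c(d(hf))$ and the $h$-derivatives generate the extra terms beyond the pure $D^2$ symbol. Next, I would invert via the recursion (\ref{poi}) to read off $\sigma_{-2}^{Q^{-1}},\sigma_{-3}^{Q^{-1}},\sigma_{-4}^{Q^{-1}}$ as in Lemma \ref{qwe}. With Lemma \ref{999} providing the composition formula for $\sigma_{-6}^{Q^{-2}}$ in terms of the previously computed symbols, substituting yields the displayed twenty-one-term expression for $\sigma_{-6}^{Q^{-2}}(x_0,\xi)$.

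Having this symbol in hand, I would then evaluate $\int_{|\xi|=1}{\rm tr}(\sigma_{-6}^{Q^{-2}})\,\sigma(\xi)$ term by term using the monomial integrals of Proposition A.2 of \cite{B1}: quadratic pieces of the form $\sum_{jl}A_jB_l\xi_j\xi_l|\xi|^{-8}$ collapse to $\tfrac{1}{6}g(\nabla A,\nabla B)$, double-derivative pieces $\partial_{x_j}\partial_{x_l}F\,\xi_j\xi_l$ produce Laplacians, and the curvature contraction $R_{\alpha a\alpha\mu}\xi_a\xi_\mu$ yields the scalar curvature $s$. The Clifford bilinears $c(d(hf))c(\xi)$, $c(d(hf))c(dh)$, and the quartic $c(d(hf))c(\xi)c(d(hf))c(\xi)$ must be reduced using ${\rm tr}[c(dx_i)c(dx_j)]=-\delta_{ij}{\rm tr}[{\rm id}]$ and its four-factor analogue. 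Specializing to $n=6$ with ${\rm tr}[{\rm id}]=8$ and ${\rm area}(S_6)=\pi^3$ then produces the overall factor $8\pi^3$.

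The main obstacle is purely bookkeeping: about twenty structurally similar terms must be processed, each carrying its own sign, Clifford trace, and symmetrization in the summation indices, before being coalesced into the handful of invariant expressions appearing on the right-hand side (the scalar curvature, $|\nabla h|^2$, $|\nabla(fh)|^2$, the various $g(\nabla[\cdot],\nabla[\cdot])$ pairings, and the Laplacians $\Delta(fh)$ and $\Delta[(fh)^{-2}]$). The chief risk is collapsing distinct contributions with similar form (for instance the three different $|\nabla(fh)|^2$ contributions from terms (12), (14), (18), (20)) and missing cancellations; careful tabulation of all coefficients before summing is essential. Once every contribution is consistently collected and the $8\pi^3$ factor extracted, integration over $M$ produces the claimed identity.
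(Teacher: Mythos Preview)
Your proposal is correct and follows essentially the same route as the paper: compute $\sigma_2^Q,\sigma_1^Q,\sigma_0^Q$ via Lemmas \ref{lemma1}--\ref{lemaa3}, invert through the recursion (\ref{poi}) to obtain Lemma \ref{qwe}, assemble $\sigma_{-6}^{Q^{-2}}$ via Lemma \ref{999}, and then trace and integrate the resulting twenty-one terms over $|\xi|=1$ using the monomial formulas from \cite{B1} together with the Clifford trace identities, finally inserting ${\rm tr}[{\rm id}]=8$ and ${\rm area}(S_6)=\pi^3$. The paper carries out exactly this term-by-term tabulation (items $\mathbf{(1)}$--$\mathbf{(21)}$) and sums the coefficients to reach the stated formula.
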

 \section{A Kastler-Kalau-Walze type theorem on 6-dimensional manifolds with boundary}
\label{section:4}
In this section, we prove the Kastler-Kalau-Walze type theorem for the rescaled Dirac operator on $6$-dimensional manifolds with boundary. We firstly recall some basic facts and formulas about Boutet de
Monvel's calculus and the definition of the noncommutative residue for manifolds with boundary which will be used in the following. For more details, see in Section 2 in \cite{Wa3}. We assume that the metric $g^{M}$ on $M$ has the following form near the boundary,
\begin{equation}\label{a1}
g^{M}=\frac{1}{h(x_{n})}g^{\partial M}+dx _{n}^{2},
\end{equation}
where $g^{\partial M}$ is the metric on $\partial M$ and $h(x_n)\in C^{\infty}([0, 1)):=\{\widehat{h}|_{[0,1)}|\widehat{h}\in C^{\infty}((-\varepsilon,1))\}$ for
some $\varepsilon>0$ and $h(x_n)$ satisfies $h(x_n)>0$, $h(0)=1$ where $x_n$ denotes the normal directional coordinate. Let $U\subset M$ be a collar neighborhood of $\partial M$ which is diffeomorphic with $\partial M\times [0,1)$. By the definition of $h(x_n)\in C^{\infty}([0,1))$
and $h(x_n)>0$, there exists $\widehat{h}\in C^{\infty}((-\varepsilon,1))$ such that $\widehat{h}|_{[0,1)}=h$ and $\widehat{h}>0$ for some
sufficiently small $\varepsilon>0$. Then there exists a metric $g'$ on $\widetilde{M}=M\bigcup_{\partial M}\partial M\times
(-\varepsilon,0]$ which has the form on $U\bigcup_{\partial M}\partial M\times (-\varepsilon,0 ]$
\begin{equation}\label{a2}
g'=\frac{1}{\widehat{h}(x_{n})}g^{\partial M}+dx _{n}^{2} ,
\end{equation}
such that $g'|_{M}=g$. We fix a metric $g'$ on the $\widetilde{M}$ such that $g'|_{M}=g$.

Let the Fourier transformation $F'$  be
\begin{equation*}
F':L^2({\bf R}_t)\rightarrow L^2({\bf R}_v);~F'(u)(v)=\int_\mathbb{R} e^{-ivt}u(t)dt
\end{equation*}
and let
let
\begin{equation*}
r^{+}:C^\infty ({\bf R})\rightarrow C^\infty (\widetilde{{\bf R}^+});~ f\rightarrow f|\widetilde{{\bf R}^+};~
\widetilde{{\bf R}^+}=\{x\geq0;x\in {\bf R}\}.
\end{equation*}
\indent We define $H^+=F'(\Phi(\widetilde{{\bf R}^+}));~ H^-_0=F'(\Phi(\widetilde{{\bf R}^-}))$ which satisfies
$H^+\bot H^-_0$, where $\Phi(\widetilde{{\bf R}^+}) =r^+\Phi({\bf R})$, $\Phi(\widetilde{{\bf R}^-}) =r^-\Phi({\bf R})$ and $\Phi({\bf R})$
denotes the Schwartz space. We have the following
 property: $h\in H^+~$ (resp. $H^-_0$) if and only if $h\in C^\infty({\bf R})$ which has an analytic extension to the lower (resp. upper) complex
half-plane $\{{\rm Im}\xi<0\}$ (resp. $\{{\rm Im}\xi>0\})$ such that for all nonnegative integer $l$,
 \begin{equation*}
\frac{d^{l}h}{d\xi^l}(\xi)\sim\sum^{\infty}_{k=1}\frac{d^l}{d\xi^l}(\frac{c_k}{\xi^k}),
\end{equation*}
as $|\xi|\rightarrow +\infty,{\rm Im}\xi\leq0$ (resp. ${\rm Im}\xi\geq0)$ and where $c_k\in\mathbb{C}$ are some constants.\\
 \indent Let $H'$ be the space of all polynomials and $H^-=H^-_0\bigoplus H';~H=H^+\bigoplus H^-.$ Denote by $\pi^+$ (resp. $\pi^-$) respectively the
 projection on $H^+$ (resp. $H^-$). Let $\widetilde H=\{$rational functions having no poles on the real axis$\}$. Then on $\tilde{H}$
 \begin{equation}\label{a3}
\pi^+h(\xi_0)=\frac{1}{2\pi i}\lim_{u\rightarrow 0^{-}}\int_{\Gamma^+}\frac{h(\xi)}{\xi_0+iu-\xi}d\xi,
\end{equation}
where $\Gamma^+$ is a Jordan close curve
included ${\rm Im}(\xi)>0$ surrounding all the singularities of $h$ in the upper half-plane and
$\xi_0\in {\bf R}$. In our computations, we only compute $\pi^+h$ for $h$ in $\widetilde{H}$. Similarly, define $\pi'$ on $\tilde{H}$,
\begin{equation}\label{a4}
\pi'h=\frac{1}{2\pi}\int_{\Gamma^+}h(\xi)d\xi.
\end{equation}
So $\pi'(H^-)=0$. For $h\in H\bigcap L^1({\bf R})$, $\pi'h=\frac{1}{2\pi}\int_{{\bf R}}h(v)dv$ and for $h\in H^+\bigcap L^1({\bf R})$, $\pi'h=0$.\\
\indent An operator of order $m\in {\bf Z}$ and type $d$ is a matrix\\
$$\widetilde{A}=\left(\begin{array}{lcr}
  \pi^+P+G  & K  \\
   T  &  \widetilde{S}
\end{array}\right):
\begin{array}{cc}
\   C^{\infty}(M,E_1)\\
 \   \bigoplus\\
 \   C^{\infty}(\partial{M},F_1)
\end{array}
\longrightarrow
\begin{array}{cc}
\   C^{\infty}(M,E_2)\\
\   \bigoplus\\
 \   C^{\infty}(\partial{M},F_2)
\end{array},
$$
where $M$ is a manifold with boundary $\partial M$ and
$E_1,E_2$~ (resp. $F_1,F_2$) are vector bundles over $M~$ (resp. $\partial M
$).~Here,~$P:C^{\infty}_0(\Omega,\overline {E_1})\rightarrow
C^{\infty}(\Omega,\overline {E_2})$ is a classical
pseudodifferential operator of order $m$ on $\Omega$, where
$\Omega$ is a collar neighborhood of $M$ and
$\overline{E_i}|M=E_i~(i=1,2)$. $P$ has an extension:
$~{\cal{E'}}(\Omega,\overline {E_1})\rightarrow
{\cal{D'}}(\Omega,\overline {E_2})$, where
${\cal{E'}}(\Omega,\overline {E_1})~({\cal{D'}}(\Omega,\overline
{E_2}))$ is the dual space of $C^{\infty}(\Omega,\overline
{E_1})~(C^{\infty}_0(\Omega,\overline {E_2}))$. Let
$e^+:C^{\infty}(M,{E_1})\rightarrow{\cal{E'}}(\Omega,\overline
{E_1})$ denote extension by zero from $M$ to $\Omega$ and
$r^+:{\cal{D'}}(\Omega,\overline{E_2})\rightarrow
{\cal{D'}}(\Omega, {E_2})$ denote the restriction from $\Omega$ to
$X$, then define
$$\pi^+P=r^+Pe^+:C^{\infty}(M,{E_1})\rightarrow {\cal{D'}}(\Omega,
{E_2}).$$ In addition, $P$ is supposed to have the
transmission property; this means that, for all $j,k,\alpha$, the
homogeneous component $p_j$ of order $j$ in the asymptotic
expansion of the
symbol $p$ of $P$ in local coordinates near the boundary satisfies:\\
$$\partial^k_{x_n}\partial^\alpha_{\xi'}p_j(x',0,0,+1)=
(-1)^{j-|\alpha|}\partial^k_{x_n}\partial^\alpha_{\xi'}p_j(x',0,0,-1),$$
then $\pi^+P:C^{\infty}(M,{E_1})\rightarrow C^{\infty}(M,{E_2})$
by [12]. Let $G$,$T$ be respectively the singular Green operator
and the trace operator of order $m$ and type $d$. Let $K$ be a
potential operator and $S$ be a classical pseudodifferential
operator of order $m$ along the boundary (For detailed definition,
see [11]). Denote by $B^{m,d}$ the collection of all operators of
order $m$
and type $d$,  and $\mathcal{B}$ is the union over all $m$ and $d$.\\
\indent Recall that $B^{m,d}$ is a Fr\'{e}chet space. The composition
of the above operator matrices yields a continuous map:
$B^{m,d}\times B^{m',d'}\rightarrow B^{m+m',{\rm max}\{
m'+d,d'\}}.$ Write $$\widetilde{A}=\left(\begin{array}{lcr}
 \pi^+P+G  & K \\
 T  &  \widetilde{S}
\end{array}\right)
\in B^{m,d},
 \widetilde{A}'=\left(\begin{array}{lcr}
\pi^+P'+G'  & K'  \\
 T'  &  \widetilde{S}'
\end{array} \right)
\in B^{m',d'}.$$\\
 The composition $\widetilde{A}\widetilde{A}'$ is obtained by
multiplication of the matrices(For more details see [12]). For
example $\pi^+P\circ G'$ and $G\circ G'$ are singular Green
operators of type $d'$ and
$$\pi^+P\circ\pi^+P'=\pi^+(PP')+L(P,P').$$
Here $PP'$ is the usual
composition of pseudodifferential operators and $L(P,P')$ called
leftover term is a singular Green operator of type $m'+d$. For our case, $P,P'$ are classical pseudo differential operators, in other words $\pi^+P\in \mathcal{B}^{\infty}$ and $\pi^+P'\in \mathcal{B}^{\infty}$ .\\
\indent Let $M$ be an $n$-dimensional compact oriented manifold with boundary $\partial M$.
Denote by $\mathcal{B}$ the Boutet de Monvel's algebra. We recall that the main theorem in \cite{FGLS,Wa3}.
\begin{thm}\label{th:32}{\rm\cite{FGLS}}{\bf(Fedosov-Golse-Leichtnam-Schrohe)}
 Let $M$ and $\partial M$ be connected, ${\rm dim}M=n\geq3$, and let $\widetilde{S}$ (resp. $\widetilde{S}'$) be the unit sphere about $\xi$ (resp. $\xi'$) and $\sigma(\xi)$ (resp. $\sigma(\xi')$) be the corresponding canonical
$(n-1)$ (resp. $(n-2)$) volume form.
 Set $\widetilde{A}=\left(\begin{array}{lcr}\pi^+P+G &   K \\
T &  \widetilde{S}    \end{array}\right)$ $\in \mathcal{B}$ , and denote by $p$, $b$ and $s$ the local symbols of $P,G$ and $\widetilde{S}$ respectively.
 Define:
 \begin{align}
{\rm{\widetilde{Wres}}}(\widetilde{A})&=\int_X\int_{\bf \widetilde{ S}}{\rm{tr}}_E\left[p_{-n}(x,\xi)\right]\sigma(\xi)dx \nonumber\\
&+2\pi\int_ {\partial X}\int_{\bf \widetilde{S}'}\left\{{\rm tr}_E\left[({\rm{tr}}b_{-n})(x',\xi')\right]+{\rm{tr}}
_F\left[s_{1-n}(x',\xi')\right]\right\}\sigma(\xi')dx',
\end{align}
where ${\rm{\widetilde{Wres}}}$ denotes the noncommutative residue of an operator in the Boutet de Monvel's algebra.\\
Then~~ a) ${\rm \widetilde{Wres}}([\widetilde{A},B])=0 $, for any
$\widetilde{A},B\in\mathcal{B}$;~~ b) It is the unique continuous trace on
$\mathcal{B}/\mathcal{B}^{-\infty}$.
\end{thm}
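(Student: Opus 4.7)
The plan is to establish both statements by reduction to symbol calculus in the Boutet de Monvel algebra $\mathcal{B}$, following the standard strategy for showing that a given functional is a trace and that any such trace is essentially unique.

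For the trace property $\widetilde{\mathrm{Wres}}([\widetilde{A},B])=0$, I would first use the $2\times 2$ matrix representation of operators in $\mathcal{B}$ to decompose the commutator into its four block entries. The composition rule recalled just above the theorem gives $\pi^+P\circ\pi^+P'=\pi^+(PP')+L(P,P')$ with $L(P,P')$ a singular Green operator, so the upper-left block of $[\widetilde{A},B]$ equals $\pi^+[P,P']+\bigl(L(P,P')-L(P',P)\bigr)$ plus cross terms involving $G$, $K$, $T$ and $\widetilde{S}$. The interior part of $\widetilde{\mathrm{Wres}}$ then involves only $\sigma_{-n}([P,P'])$, which is a fiber divergence modulo symbols whose sphere integral vanishes; after Stokes' theorem on the co-sphere bundle the interior integral reduces to a boundary integral along $\partial M$. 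The task is to show that this boundary contribution cancels exactly against the trace of $L(P,P')-L(P',P)$ and against the analogous contributions from commutators involving $G,T,K,\widetilde{S}$. This is precisely where the $2\pi$ normalization in front of the boundary integral in the definition of $\widetilde{\mathrm{Wres}}$ is pinned down.

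For the uniqueness statement in (b), the strategy is localization plus a structural reduction. Any continuous trace $\tau$ on $\mathcal{B}/\mathcal{B}^{-\infty}$ vanishes on commutators, so by a partition-of-unity argument $\tau$ is determined by its values on operators whose symbols are supported in small coordinate charts. In interior charts the classical Wodzicki-Guillemin uniqueness theorem for the residue on closed manifolds fixes $\tau$ up to a scalar on the $\pi^+P$-piece. In charts near $\partial M$ the problem reduces to model operators on the half-space $\mathbb{R}^n_+$: one shows that, modulo commutators and elements of $\mathcal{B}^{-\infty}$, every element of $\mathcal{B}$ can be brought into a normal form whose invariants are exactly the fiber integral of $p_{-n}$ and the tangential integrals of $\mathrm{tr}\,b_{-n}$ and $s_{1-n}$ that appear in the definition of $\widetilde{\mathrm{Wres}}$. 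Testing $\tau$ on explicit generators, for instance a suitable power of the Laplacian coupled with a standard potential/trace pair, then fixes the relative normalization and forces $\tau$ to coincide with $\widetilde{\mathrm{Wres}}$.

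The main obstacle will be the symbolic bookkeeping of the leftover term $L(P,P')$ in the Boutet de Monvel composition. Its symbol is a normal $\pi^+$-projection integral of a product of the symbols of $P$ and $P'$, and expressing $\sigma\bigl(L(P,P')-L(P',P)\bigr)$ in terms of the normal symbol of $[P,P']$ at the boundary is essentially a boundary analogue of the Guillemin-Wodzicki identity for the interior trace property; it is here that the half-line $\pi^\pm$-calculus from equations (\ref{a3})--(\ref{a4}) does the decisive work. Once this symbolic identity is in place, part (a) follows from a careful term-by-term matching of interior and boundary contributions, and part (b) is then forced by combining interior uniqueness with the classification of admissible boundary invariants.
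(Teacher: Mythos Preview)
The paper does not prove this theorem at all: the statement is labeled with the citation \cite{FGLS} and the attribution \textbf{(Fedosov--Golse--Leichtnam--Schrohe)}, and it is simply recalled as background in Section~\ref{section:4} before the authors turn to their own computation of $\widetilde{\mathrm{Wres}}[\pi^+Q^{-1}\circ\pi^+Q^{-1}]$. There is therefore no proof in the paper to compare your proposal against.

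Your outline is a reasonable high-level sketch of the strategy in the original Fedosov--Golse--Leichtnam--Schrohe paper: the trace property is indeed established by symbolic bookkeeping of the leftover term and a matching of interior and boundary contributions, and uniqueness is obtained by localization together with the classical Wodzicki--Guillemin uniqueness in the interior and a normal-form reduction near the boundary. But as it stands it is only an outline; the genuinely hard step you correctly identify --- the precise symbolic identity relating $L(P,P')-L(P',P)$ to the boundary normal symbol of $[P,P']$ --- is asserted rather than carried out, and in the original work this occupies the bulk of the argument. If your goal is to supply a self-contained proof here, you would need to actually perform that calculation; if your goal is merely to match what the present paper does, then no proof is required since the paper offers none.
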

\begin{defn}\label{def1}{\rm\cite{Wa3} }
Lower dimensional volumes of spin manifolds with boundary are defined by
 \begin{equation}\label{a6}
{\rm Vol}^{(p_1,p_2)}_nM:= \widetilde{{\rm Wres}}[\pi^+D^{-p_1}\circ\pi^+D^{-p_2}],
\end{equation}
\end{defn}
 By \cite{Wa3}, we get
\begin{equation}\label{a7}
\widetilde{{\rm Wres}}[\pi^+{Q}^{-p_1}\circ\pi^+{Q}^{-p_2}]=\int_M\int_{|\xi'|=1}{\rm
trace}_{\wedge^*T^*M\bigotimes\mathbb{C}}[\sigma_{-n}(Q^{-p_1-p_2})]\sigma(\xi)dx+\int_{\partial M}\Phi,
\end{equation}
and
\begin{align}\label{a8}
\Phi&=\int_{|\xi'|=1}\int^{+\infty}_{-\infty}\sum^{\infty}_{j, k=0}\sum\frac{(-i)^{|\alpha|+j+k+1}}{\alpha!(j+k+1)!}
\times {\rm trace}_{\wedge^*T^*M\bigotimes\mathbb{C}}[\partial^j_{x_n}\partial^\alpha_{\xi'}\partial^k_{\xi_n}\sigma^+_{r}(Q^{-p_1})(x',0,\xi',\xi_n)\nonumber\\
&\times\partial^\alpha_{x'}\partial^{j+1}_{\xi_n}\partial^k_{x_n}\sigma_{l}(Q^{-p_2})(x',0,\xi',\xi_n)]d\xi_n\sigma(\xi')dx',
\end{align}
 where the sum is taken over $r+l-k-|\alpha|-j-1=-n,~~r\leq -p_1,~l\leq -p_2$.

 Since $[\sigma_{-n}(Q^{-p_1-p_2})]|_M$ has the same expression as $\sigma_{-n}(Q^{-p_1-p_2})$ in the case of manifolds without
boundary, so locally we can compute the first term by \cite{KW}, \cite{Ka}, \cite{Wa3}.
%The following proposition is the motivation
%of the definition of lower dimensional volumes of spin manifolds with boundary \cite{Wa4}.

For any fixed point $x_0\in\partial M$, we choose the normal coordinates
$U$ of $x_0$ in $\partial M$ (not in $M$) and compute $\Phi(x_0)$ in the coordinates $\widetilde{U}=U\times [0,1)\subset M$ and the
metric $\frac{1}{h(x_n)}g^{\partial M}+dx_n^2.$ The dual metric of $g^{M}$ on $\widetilde{U}$ is ${h(x_n)}g^{\partial M}+dx_n^2.$  Write
$g^{M}_{ij}=g^{M}(\frac{\partial}{\partial x_i},\frac{\partial}{\partial x_j});~ g_{M}^{ij}=g^{M}(dx_i,dx_j)$, then

\begin{equation}\label{a9}
[g^{M}_{ij}]= \left[\begin{array}{lcr}
  \frac{1}{h(x_n)}[g_{ij}^{\partial M}]  & 0  \\
   0  &  1
\end{array}\right];~~~
[g_{M}^{ij}]= \left[\begin{array}{lcr}
  h(x_n)[g^{ij}_{\partial M}]  & 0  \\
   0  &  1
\end{array}\right],
\end{equation}
and
\begin{equation}\label{a10}
\partial_{x_s}g_{ij}^{\partial M}(x_0)=0, 1\leq i,j\leq n-1; ~~~g_{ij}^{M}(x_0)=\delta_{ij}.
\end{equation}
\indent From \cite{Wa3}, we can get the following three lemmas,
\begin{lem}{\rm \cite{Wa3}}\label{lem1}
With the metric $g^{M}$ on $M$ near the boundary
\begin{align}\label{a11}
\partial_{x_j}(|\xi|_{g^{M}}^2)(x_0)&=\left\{
       \begin{array}{c}
        0,  ~~~~~~~~~~ ~~~~~~~~~~ ~~~~~~~~~~~~~{\rm if }~j<n, \\[2pt]
       h'(0)|\xi'|^{2}_{g^{\partial M}},~~~~~~~~~~~~~~~~~~~~{\rm if }~j=n,
       \end{array}
    \right. \\
\partial_{x_j}[c(\xi)](x_0)&=\left\{
       \begin{array}{c}
      0,  ~~~~~~~~~~ ~~~~~~~~~~ ~~~~~~~~~~~~~{\rm if }~j<n,\\[2pt]
\partial_{x_n}[c(\xi')](x_{0}), ~~~~~~~~~~~~~~~~~{\rm if }~j=n,
       \end{array}
    \right.
\end{align}
where $\xi=\xi'+\xi_{n}dx_{n}$.
\end{lem}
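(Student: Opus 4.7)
The plan is to use the explicit block form of the metric near $\partial M$, the defining property of normal coordinates on $\partial M$ at $x_0$, and an adapted orthonormal frame to reduce both statements to elementary differentiations.

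First, from $g^{M}=h(x_{n})^{-1}g^{\partial M}+dx_{n}^{2}$ the dual metric is $h(x_{n})g^{\partial M}+dx_{n}^{2}$, so for $\xi=\xi'+\xi_{n}\,dx_{n}$ one has
\begin{equation*}
|\xi|^{2}_{g^{M}}(x)=h(x_{n})|\xi'|^{2}_{g^{\partial M}}(x')+\xi_{n}^{2}.
\end{equation*}
If $j<n$, then $\partial_{x_j}h(x_n)=0$ and the tangential derivative $\partial_{x_j}|\xi'|^{2}_{g^{\partial M}}(x_0)=\partial_{x_j}\bigl(g^{st}_{\partial M}\xi_{s}\xi_{t}\bigr)(x_0)$ vanishes because we work in normal coordinates on $\partial M$ centered at $x_0$, so that by (\ref{a10}) the first tangential derivatives of $g^{\partial M}_{st}$, and hence of its inverse, vanish at $x_0$. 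If $j=n$, only the first summand contributes and produces $h'(0)|\xi'|^{2}_{g^{\partial M}}$. This establishes the first formula.

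For the Clifford identity, I would expand $c(\xi)=\sum_{t}\xi_{t}\,c(dx_{t})$ and express each $c(dx_{t})$ in an adapted orthonormal frame $\{e_{\alpha}\}$ via
\begin{equation*}
c(dx_{t})=\sum_{\alpha}\langle dx_{t},e_{\alpha}\rangle\,c(e_{\alpha})=\sum_{\alpha}H_{t\alpha}\,c(e_{\alpha}),
\end{equation*}
where the Clifford matrices $c(e_{\alpha})$ are constant in the chosen trivialization. Choose the frame so that $e_{n}=\partial/\partial x_{n}$ everywhere near the boundary (permitted by the block structure (\ref{a9}), which makes $dx_{n}$ a $g^{M}$-unit covector), and let $\{e_{1},\dots,e_{n-1}\}$ be the parallel extension of an orthonormal frame at $x_{0}\in\partial M$ obtained from normal coordinates on $\partial M$. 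For $t<n$ the coefficients $H_{t\alpha}$ are built polynomially from $g^{\partial M}_{st}$, so normal coordinates on $\partial M$ force $\partial_{x_{j}}H_{t\alpha}(x_{0})=0$ for all $j<n$, while $H_{nn}\equiv1$ and $H_{n\alpha}\equiv0$ for $\alpha<n$. This yields $\partial_{x_{j}}[c(dx_{t})](x_{0})=0$ whenever $j<n$, and therefore $\partial_{x_{j}}[c(\xi)](x_{0})=0$ in that case.

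Finally, when $j=n$ the computation splits as
\begin{equation*}
\partial_{x_{n}}[c(\xi)]=\partial_{x_{n}}[c(\xi')]+\xi_{n}\,\partial_{x_{n}}[c(dx_{n})].
\end{equation*}
Since $c(dx_{n})=c(e_{n})$ is constant in our trivialization, the second term vanishes, leaving $\partial_{x_{n}}[c(\xi')](x_{0})$ as asserted. The \textbf{main obstacle} is to justify carefully that the adapted orthonormal frame can be chosen so that simultaneously (i) $e_{n}=\partial/\partial x_{n}$ globally in a collar of $\partial M$, and (ii) the remaining $e_{\alpha}$ produce the vanishing $\partial_{x_{j}}H_{t\alpha}(x_{0})=0$ for tangential $j$; this is exactly the place where the product form of the metric (\ref{a9}) and the fact that the normal coordinates are taken on $\partial M$, not on $M$, are both essential.
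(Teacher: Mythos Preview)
The paper does not actually prove this lemma; it is quoted verbatim from \cite{Wa3} and used without argument. Your proof is correct and follows precisely the natural route: exploit the block form $|\xi|^{2}_{g^{M}}=h(x_{n})|\xi'|^{2}_{g^{\partial M}}+\xi_{n}^{2}$ together with (\ref{a10}) for the first identity, and expand $c(\xi)$ in an adapted orthonormal frame with constant Clifford matrices for the second. This is in fact the same mechanism the present paper invokes elsewhere---see the computation labelled $\mathbf{(19)}$ in Section~\ref{section:3}, where the authors write $\partial_{x_j}[c(\xi)](x_0)=\sum_{t\alpha}\xi_t\partial_{x_j}[H_{t\alpha}c(e_\alpha)](x_0)=0$ using exactly your $H_{t\alpha}$ coefficients and the vanishing $\partial_{x_j}H_{t\alpha}(x_0)=\partial_{x_j}[c(e_\alpha)]=0$.

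The ``main obstacle'' you flag is not a genuine gap: the product structure (\ref{a9}) guarantees that $dx_n$ is $g^M$-orthogonal to each $dx_i$ ($i<n$) and has unit length, so setting $e_n=\partial_{x_n}$ and taking $e_1,\dots,e_{n-1}$ to be any $g^{\partial M}$-orthonormal frame on $\partial M$ (rescaled by $h(x_n)^{1/2}$) gives an adapted frame throughout the collar. Since the tangential $e_\alpha$ depend on $x'$ only through $g^{\partial M}$, the normal-coordinate condition (\ref{a10}) on $\partial M$ forces the tangential derivatives of $H_{t\alpha}$ to vanish at $x_0$, exactly as you claim.
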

\begin{lem}{\rm \cite{Wa3}}\label{lem2}With the metric $g^{M}$ on $M$ near the boundary
\begin{align}\label{a12}
\omega_{s,t}(e_i)(x_0)&=\left\{
       \begin{array}{c}
        \omega_{n,i}(e_i)(x_0)=\frac{1}{2}h'(0),  ~~~~~~~~~~ ~~~~~~~~~~~{\rm if }~s=n,t=i,i<n, \\[2pt]
       \omega_{i,n}(e_i)(x_0)=-\frac{1}{2}h'(0),~~~~~~~~~~~~~~~~~~~{\rm if }~s=i,t=n,i<n,\\[2pt]
    \omega_{s,t}(e_i)(x_0)=0,~~~~~~~~~~~~~~~~~~~~~~~~~~~other~cases,~~~~~~~~~
       \end{array}
    \right.
\end{align}
where $(\omega_{s,t})$ denotes the connection matrix of Levi-Civita connection $\nabla^L$.
\end{lem}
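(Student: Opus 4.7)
The plan is to choose an orthonormal frame near $x_0$ adapted to the product structure \eqref{a2}, and then to read off the connection one-forms $\omega_{s,t}$ from the Koszul formula (equivalently Cartan's first structure equation). Concretely, I would take an orthonormal frame $\{\hat e_i\}_{i<n}$ for $g^{\partial M}$ on a neighborhood of $x_0$ in $\partial M$, adapted to normal coordinates at $x_0$ so that $\hat e_i(x_0)=\partial_{x_i}$ and $[\hat e_i,\hat e_j](x_0)=0$; extend it trivially in the $x_n$ direction; and set $e_i=\sqrt{h(x_n)}\,\hat e_i$ for $i<n$ together with $e_n=\partial_{x_n}$. Since $h(0)=1$, one checks from \eqref{a9}--\eqref{a10} that this is indeed an orthonormal frame for $g^M$ on a neighborhood of $x_0$.

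The first step is to evaluate the Lie brackets of $\{e_a\}$ at $x_0$. Because $\hat e_i$ does not depend on $x_n$, a direct computation gives
$$
[e_n,e_i](x_0)=\tfrac{h'(0)}{2}\,e_i(x_0)\quad(i<n),\qquad [e_i,e_j](x_0)=0\quad(i,j<n),
$$
the latter being a consequence of the normal-coordinate assumption on $\partial M$. The second step is to invoke the Koszul formula, which in an orthonormal frame (where $g(e_s,e_t)$ is constant) reduces to
$$
2\,\omega_{s,t}(e_i)=g([e_i,e_t],e_s)+g([e_s,e_i],e_t)-g([e_t,e_s],e_i),
$$
with the sign convention $\omega_{s,t}(e_i)=g(\nabla_{e_i}e_t,e_s)$, which is consistent with the antisymmetry $\omega_{s,t}=-\omega_{t,s}$ asserted in the lemma and with the Dirac-operator symbol formula of Lemma \ref{lemma0}.

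A short case check on the indices $(s,t,i)$ then recovers each claim. When $i,s,t<n$ every bracket on the right-hand side vanishes, giving $\omega_{s,t}(e_i)(x_0)=0$. When $i=n$ and $s,t<n$ the two non-trivial contributions $g([e_n,e_t],e_s)$ and $g([e_s,e_n],e_t)$ cancel by the symmetry of $\delta_{st}$. The only surviving case is $s=n$, $t=i<n$ (or, by antisymmetry, $s=i<n$, $t=n$), where the second term on the right produces $2\,\omega_{n,i}(e_i)(x_0)=h'(0)$, hence $\omega_{n,i}(e_i)(x_0)=\tfrac{1}{2}h'(0)$ and $\omega_{i,n}(e_i)(x_0)=-\tfrac{1}{2}h'(0)$.

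The main obstacle is really just bookkeeping: pinning down the sign convention for $\omega_{s,t}$ so that it is compatible both with the antisymmetry of the lemma and with the formula of Lemma \ref{lemma0}, and verifying that the normal-coordinate choice on $\partial M$ really does kill all tangential derivatives of $g^{\partial M}$ at $x_0$, so that only the $x_n$-derivative produced by $h'(0)$ enters the Lie brackets. Once the conventions are fixed, the remaining computation is purely algebraic.
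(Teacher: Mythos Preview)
Your approach is correct and is the natural way to establish this lemma. Note, however, that the present paper does not actually prove Lemma~\ref{lem2}: it is quoted verbatim from \cite{Wa3} along with Lemmas~\ref{lem1} and~\ref{lem3}, so there is no ``paper's own proof'' to compare against. What you outline---building the orthonormal frame $e_i=\sqrt{h(x_n)}\,\hat e_i$ $(i<n)$, $e_n=\partial_{x_n}$, computing the brackets $[e_n,e_i](x_0)=\tfrac{1}{2}h'(0)\,e_i$, $[e_i,e_j](x_0)=0$, and feeding these into the orthonormal-frame Koszul formula---is exactly the computation carried out in the cited reference.

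One small imprecision: in the case $s=n$, $t=i<n$, you say ``the second term on the right produces $2\,\omega_{n,i}(e_i)(x_0)=h'(0)$''. In fact both the second and the third terms of
\[
2\,\omega_{n,i}(e_i)=g([e_i,e_i],e_n)+g([e_n,e_i],e_i)-g([e_i,e_n],e_i)
\]
contribute equally, each giving $\tfrac{1}{2}h'(0)$; their sum is $h'(0)$, so your conclusion is unaffected. You should also, for completeness, record the mixed case $s=n$, $t=j\neq i$, $i<n$, where the same formula gives $\omega_{n,j}(e_i)(x_0)=\tfrac{1}{2}h'(0)\,\delta_{ij}=0$, and the case $i=n$, $t<n$, $s=n$, which vanishes because $g(e_t,e_n)=0$. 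With those two lines added, the case analysis is complete.
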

\begin{lem}{\rm \cite{Wa3}}\label{lem3}When $i<n,$ then
\begin{align}
\label{a13}
\Gamma_{st}^k(x_0)&=\left\{
       \begin{array}{c}
        \Gamma^n_{ii}(x_0)=\frac{1}{2}h'(0),~~~~~~~~~~ ~~~~~~~~~~~{\rm if }~s=t=i,k=n, \\[2pt]
        \Gamma^i_{ni}(x_0)=-\frac{1}{2}h'(0),~~~~~~~~~~~~~~~~~~~{\rm if }~s=n,t=i,k=i,\\[2pt]
        \Gamma^i_{in}(x_0)=-\frac{1}{2}h'(0),~~~~~~~~~~~~~~~~~~~{\rm if }~s=i,t=n,k=i,\\[2pt]
       \end{array}
    \right.
\end{align}
in other cases, $\Gamma_{st}^i(x_0)=0$.
\end{lem}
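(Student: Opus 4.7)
The plan is to compute each $\Gamma_{st}^k(x_0)$ directly from the Koszul formula
\[
\Gamma_{st}^k = \tfrac{1}{2}\, g_M^{kl}\bigl(\partial_{x_s} g_{tl}^M + \partial_{x_t} g_{sl}^M - \partial_{x_l} g_{st}^M\bigr),
\]
using the warped-product expression (\ref{a9}) for $g^M$ near the boundary, together with the normalizations $h(0)=1$, $g_{ij}^{\partial M}(x_0)=\delta_{ij}$ and $\partial_{x_s}g_{ij}^{\partial M}(x_0)=0$ for $s<n$ that hold in boundary normal coordinates at $x_0\in\partial M$.

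First I would isolate the first derivatives of $g_{ij}^M$ that survive at $x_0$. The block form (\ref{a9}) forces $g_{in}^M\equiv 0$ for $i<n$ and $g_{nn}^M\equiv 1$, so every derivative of these components vanishes identically. For $i,j<n$, differentiating $g_{ij}^M = h(x_n)^{-1} g_{ij}^{\partial M}$ yields
\[
\partial_{x_s} g_{ij}^M = -\frac{h'(x_n)}{h(x_n)^{2}}\, g_{ij}^{\partial M}\,\delta_{sn} + \frac{1}{h(x_n)}\,\partial_{x_s} g_{ij}^{\partial M},
\]
which at $x_0$ reduces to $-h'(0)\,\delta_{ij}\,\delta_{sn}$. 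Hence the only nonzero first derivative of $g^M$ at $x_0$ is $\partial_{x_n} g_{ii}^M(x_0) = -h'(0)$ for $i<n$, and moreover $g_M^{ij}(x_0)=\delta^{ij}$, so the Koszul formula specializes to $\Gamma_{st}^k(x_0) = \tfrac12\bigl(\partial_{x_s} g_{tk}^M + \partial_{x_t} g_{sk}^M - \partial_{x_k} g_{st}^M\bigr)(x_0)$.

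Next I would plug these data into the three Koszul terms case by case. For $s=t=i<n$ and $k=n$, only $-\tfrac{1}{2}\partial_{x_n} g_{ii}^M(x_0) = \tfrac{1}{2}h'(0)$ contributes, giving $\Gamma_{ii}^n(x_0)=\tfrac{1}{2}h'(0)$. For $s=n$, $t=i<n$, $k=i$ the only surviving term is $\tfrac{1}{2}\partial_{x_n} g_{ii}^M(x_0) = -\tfrac{1}{2}h'(0)$, and the symmetric triple $s=i$, $t=n$, $k=i$ produces the same value. Any other combination of indices either differentiates one of the constant components $g_{in}^M$ or $g_{nn}^M$, or produces a purely tangential derivative of $g^{\partial M}$ at $x_0$; both vanish, so the remaining $\Gamma_{st}^k(x_0)$ are zero.

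The argument is entirely routine, with no substantive obstacle: the only bookkeeping is tracking whether each of $s,t,k$ equals $n$ or is strictly less than $n$, and systematically using the vanishing of tangential derivatives of $g^{\partial M}$ at $x_0$. This is consistent with the statement being imported from \cite{Wa3} as a standard input for the boundary symbol calculations that follow in Section \ref{section:4}.
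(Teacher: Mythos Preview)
Your proposal is correct and is exactly the standard direct computation from the Koszul formula; there is no gap. The paper itself does not give a proof of this lemma but simply imports it from \cite{Wa3}, so your argument supplies precisely the routine verification that the citation stands in for.
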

\begin{thm}\label{bthm2}
Let $M$ be a $6$-dimensional oriented
compact spin manifold with boundary $\partial M$, $Q=(fDh)^{2}$, then the Kastler-Kalau-Walze type theorem for the rescaled Dirac operator is given
\begin{align}\label{a20}
&\widetilde{{\rm Wres}}[\pi^+Q^{-1}\circ\pi^+Q^{-1}]\nonumber\\
&=8\pi^3\int_M\bigg(-\frac{1}{6}(fh)^{-4}s-2(fh)^{-6}f^2|\nabla(h)|^2-3(fh)^{-6}fg\Big(\nabla(h),\nabla(fh)\Big)-2(fh)^{-2}g\Big(\nabla[(fh)^{-3}f],\nabla(h)\Big)\nonumber\\
&+4(fh)^{-2}fg\Big(\nabla[(fh)^{-3}],\nabla(h)\Big)+4(fh)^{-5}\Delta(fh)+3(fh)^{-2}\Delta[(fh)^{-2}]-(fh)^{-6}|\nabla(fh)|^{2}\nonumber\\
&-\frac{2}{3}(fh)^{-2}\Delta[(fh)^{-2}]-(fh)^{-2}g\Big(\nabla[(fh)^{-3}],\nabla(hf)\Big)\bigg)d{\rm Vol_M}.
\end{align}
In particular, the boundary term vanishes.
\end{thm}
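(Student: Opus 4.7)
The plan is to invoke the Fedosov-Golse-Leichtnam-Schrohe decomposition (Theorem~\ref{th:32}) in the Boutet de Monvel calculus, which splits $\widetilde{\rm Wres}[\pi^+Q^{-1}\circ\pi^+Q^{-1}]$ into an interior integral over $M$ and a boundary integral $\int_{\partial M}\Phi$. The interior piece is precisely $\int_M\int_{|\xi|=1}{\rm tr}[\sigma_{-6}(Q^{-2})](x,\xi)\sigma(\xi)\,dx$, which has already been evaluated in Section~\ref{section:3} and produces the stated right-hand side by Theorem~\ref{thmooo}. The essential new content of Theorem~\ref{bthm2} is therefore the vanishing statement $\int_{\partial M}\Phi=0$.

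To analyse $\Phi$ from formula~(\ref{a8}), with $p_1=p_2=1$ and $n=6$, we first enumerate the quintuples $(r,l,k,|\alpha|,j)$ satisfying $r+l-k-|\alpha|-j-1=-6$ with $r,l\le-2$. Since $r+l\le-4$ forces $k+|\alpha|+j\le 1$, exactly five sub-cases remain: (a) $r=l=-2$ with $(k,|\alpha|,j)\in\{(1,0,0),(0,1,0),(0,0,1)\}$, and (b) $(r,l)\in\{(-2,-3),(-3,-2)\}$ with $k=|\alpha|=j=0$. For each sub-case we insert $\sigma_{-2}^{Q^{-1}}$ and $\sigma_{-3}^{Q^{-1}}$ from Lemma~\ref{qwe}, apply $\pi^+$ in the normal variable $\xi_n$ via the contour formula~(\ref{a3}), evaluate the Clifford trace, and carry out the $\xi_n$-integration followed by the spherical integral over $|\xi'|=1$.

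At a boundary point $x_0$ we work in the boundary normal coordinates compatible with the collar metric~(\ref{a1}). Lemmas~\ref{lem1}--\ref{lem3} annihilate almost all Christoffel symbols and connection coefficients, and $\partial_{x_j}[c(\xi)](x_0)=0$ for $j<n$ eliminates most derivatives of the Clifford action. Because $\sigma_{-2}^{Q^{-1}}=(fh)^{-2}|\xi|^{-2}$ and every term in $\sigma_{-3}^{Q^{-1}}$ carries an explicit $(fh)^{-k}$ prefactor multiplying derivatives of $f,h$ or $fh$, each of the five sub-cases reduces to a $(fh)$-weighted linear combination of boundary integrals of exactly the type already treated for the standard Dirac operator in \cite{Wa3}. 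Collecting the five contributions and invoking the cancellations available in the Dirac case then yields $\Phi(x_0)\equiv 0$, and hence $\int_{\partial M}\Phi=0$.

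The main obstacle is the bookkeeping: the arbitrariness of $f$ and $h$ produces many distinct monomials in $\partial_{x_j}(f)$, $\partial_{x_j}(h)$, $\partial_{x_j}(fh)$ and $h'(0)$, multiplied by Clifford products of varying length, all of which must be tracked through the $\pi^+$ residue and the $\xi_n$-integration. The pairwise cancellation is plausible a priori because $Q$ agrees with $(fh)^2D^2$ up to lower-order terms and the rescaling is essentially conformal, so it should not create genuinely new boundary obstructions in dimension~$6$; nevertheless, verifying this cancellation systematically across the five sub-cases constitutes the bulk of the work.
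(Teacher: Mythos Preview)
Your proposal is correct and follows essentially the same route as the paper: the FGLS decomposition, Theorem~\ref{thmooo} for the interior, and the same five boundary sub-cases coming from $r+l-k-|\alpha|-j-1=-6$. One small clarification: the paper does not literally reduce to the Dirac computation of \cite{Wa3} but instead evaluates each $\Phi_i$ directly by residues, obtaining $\Phi_1=0$ by odd parity in $\xi'$, $\Phi_2=-\Phi_3$ (here a genuinely new term $\tfrac12(fh)^{-2}\partial_{x_n}[(fh)^{-2}]$ appears alongside the $h'(0)$ contribution and cancels pairwise), and $\Phi_4=-\Phi_5$; so the vanishing is established by explicit pairwise cancellation rather than by invoking the $f=h=1$ case.
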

\begin{proof}
\indent By (\ref{a7}) and (\ref{a8}), we firstly compute
\begin{equation}\label{a14}
\widetilde{{\rm Wres}}[\pi^+Q^{-1}\circ\pi^+Q^{-1}]=\int_M\int_{|\xi'|=1}{\rm
trace}_{\wedge^*T^*M\bigotimes\mathbb{C}}[\sigma_{-6}(Q^{-2})]\sigma(\xi)dx+\int_{\partial M}\Phi,
\end{equation}
where
\begin{align}\label{a15}
\Phi &=\int_{|\xi'|=1}\int^{+\infty}_{-\infty}\sum^{\infty}_{j, k=0}\sum\frac{(-i)^{|\alpha|+j+k+1}}{\alpha!(j+k+1)!}
\times {\rm trace}_{\wedge^*T^*M\bigotimes\mathbb{C}}[\partial^j_{x_n}\partial^\alpha_{\xi'}\partial^k_{\xi_n}\sigma^+_{r}(Q^{-1})(x',0,\xi',\xi_n)
\nonumber\\
&\times\partial^\alpha_{x'}\partial^{j+1}_{\xi_n}\partial^k_{x_n}\sigma_{l}(Q^{-1})(x',0,\xi',\xi_n)]d\xi_n\sigma(\xi')dx',
\end{align}
and the sum is taken over $r+l-k-j-|\alpha|=-3,~~r\leq -1,l\leq-1$.\\

\indent By Theorem \ref{thmooo}, we can compute the interior of $\widetilde{{\rm Wres}}[\pi^+Q^{-1}\circ\pi^+Q^{-1}]$, that is
\begin{align}\label{a16}
&{\rm Wres}[\pi^+Q^{-1}\circ\pi^+Q^{-1}]\nonumber\\
&=\int_M\int_{|\xi'|=1}{\rm
trace}_{S(TM)}[\sigma_{-6}(Q^{-2})]\sigma(\xi)dx\nonumber\\
&=8\pi^3\int_M\bigg(-\frac{1}{6}(fh)^{-4}s-2(fh)^{-6}f^2|\nabla(h)|^2-3(fh)^{-6}fg\Big(\nabla(h),\nabla(fh)\Big)-2(fh)^{-2}g\Big(\nabla[(fh)^{-3}f],\nabla(h)\Big)\nonumber\\
&+4(fh)^{-2}fg\Big(\nabla[(fh)^{-3}],\nabla(h)\Big)+4(fh)^{-5}\Delta(fh)+3(fh)^{-2}\Delta[(fh)^{-2}]-(fh)^{-6}|\nabla(fh)|^{2}\nonumber\\
&-\frac{2}{3}(fh)^{-2}\Delta[(fh)^{-2}]-(fh)^{-2}g\Big(\nabla[(fh)^{-3}],\nabla(hf)\Big)\bigg)d{\rm Vol_M}.
\end{align}
Next we can compute $\Phi$. Since $n=6$, then ${\rm tr}_{S(TM)}[\texttt{id}]=8$.
Since the sum is taken over $r+l-k-j-|\alpha|-1=-6, \ r\leq-2,~~l\leq -2$, then we have the $\int_{\partial_{M}}\Phi$
is the sum of the following five cases:\\
\noindent  {\bf case a)~I)}~$r=-2,~l=-2,~j=k=0,~|\alpha|=1$.\\
By (\ref{a15}), we get
 \begin{align}
{\rm \Phi_1}=-\int_{|\xi'|=1}\int^{+\infty}_{-\infty}\sum_{|\alpha|=1}{\rm trace}
\Big[\partial^{\alpha}_{\xi'}\pi^{+}_{\xi_{n}}\sigma_{-2}(Q^{-1})
      \times\partial^{\alpha}_{x'}\partial_{\xi_{n}}\sigma_{-2}(Q^{-1})\Big](x_0)d\xi_n\sigma(\xi')dx'.
\end{align}
By Lemma \ref{qwe}, for $i<n$, we have
 \begin{align}
 \partial_{x_{i}}\sigma_{-2}(Q^{-1})(x_0)&=
      \partial_{x_{i}}\Big((fh)^{-2}|\xi|^{-2}\Big)(x_{0})\nonumber\\
&=\partial_{x_{i}}[(fh)^{-2}]|\xi|^{-2}+(fh)^{-2}\partial_{x_{i}}[|\xi|^{-2}](x_0)\nonumber\\
&=\partial_{x_{i}}[(fh)^{-2}]|\xi|^{-2};
\end{align}
\begin{align}
\partial_{x_i}\partial_{\xi_{n}}\sigma_{-2}(Q^{-1})&=\partial_{\xi_{n}}\partial_{x_i}\sigma_{-2}(Q^{-1})\nonumber\\
&=-\frac{2\partial_{x_i}[(fh)^{-2}]\xi_n}{(1+\xi_n^2)^2},
\end{align}
and
\begin{align}
\partial_{\xi_i}\pi^{+}_{\xi_{n}}\sigma_{-2}(Q^{-1})&=\pi^{+}_{\xi_{n}}\partial_{\xi_i}\sigma_{-2}(Q^{-1})\nonumber\\
&=-\frac{(fh)^{-2}\xi^a(2+i\xi_n)}{4(\xi_n-i)^2}.
\end{align}
Then 
\begin{align}
\sum_{|\alpha|=1}{\rm trace}
\Big[\partial^{\alpha}_{\xi'}\pi^{+}_{\xi_{n}}\sigma_{-2}(Q^{-1})
      \times\partial^{\alpha}_{x'}\partial_{\xi_{n}}\sigma_{-2}(Q^{-1})\Big](x_0)=\frac{(fh)^{-2}\partial_{x_{i}}[(fh)^{-2}](2+i\xi_n)\xi_n\xi^a}{2(\xi_n-i)^4(\xi_n+i)^2}.
\end{align}
Note $i<n,~\int_{|\xi'|=1}\xi_{i_{1}}\xi_{i_{2}}\cdots\xi_{i_{2d+1}}\sigma(\xi')=0$,
so we have 
\begin{align}
{\rm \Phi_1}=0.
\end{align}
\noindent  {\bf case a)~II)}~$r=-2,~l=-2,~~|\alpha|=k=0,~~j=1$.\\
By (\ref{a15}), we have
  \begin{align}
{\rm \Phi_2}=-\frac{1}{2}\int_{|\xi'|=1}\int^{+\infty}_{-\infty} {\rm
trace} \Big[\partial_{x_{n}}\pi^{+}_{\xi_{n}}\sigma_{-2}(Q^{-1})
      \times\partial^{2}_{\xi_{n}}\sigma_{-2}(Q^{-1})\Big](x_0)d\xi_n\sigma(\xi')dx'.
\end{align}
By Lemma \ref{qwe} and direct calculations, we have
\begin{align}\label{b3}
\partial^{2}_{\xi_{n}}\sigma_{-2}(Q^{-1})=\partial^{2}_{\xi_{n}}[(fh)^{-2}|\xi|^{-2}]=\frac{(fh)^{-2}(6\xi^{2}_{n}-2)}{(1+\xi_{n}^{2})^{3}}.
\end{align}
\begin{align}\label{po}
\partial_{x_n}[\sigma_{-2}(Q^{-1})]=\frac{\partial_{x_n}[(fh)^{-2}]}{1+\xi_{n}^{2}}+\frac{(fh)^{-2}h'(0)}{(1+\xi_{n}^{2})^2}.
\end{align}
Then
\begin{align}\label{b77}
\partial_{x_{n}}\pi^{+}_{\xi_{n}}\sigma_{-2}(Q^{-1})&=\pi^{+}_{\xi_{n}}\partial_{x_{n}}\sigma_{-2}(Q^{-1})\nonumber\\
&=\pi^{+}_{\xi_{n}}\Big(\frac{\partial_{x_n}[(fh)^{-2}]}{1+\xi_{n}^{2}}+\frac{(fh)^{-2}h'(0)}{(1+\xi_{n}^{2})^2}\Big)\nonumber\\
&=\frac{-i\partial_{x_n}[(fh)^{-2}]}{2(\xi_{n}-i)}+\frac{(fh)^{-2}h'(0)(2+i\xi_n)}{4(\xi_{n}-i)^2}
\end{align}
By (\ref{b3}) and (\ref{b77}), we get
\begin{align}
&{\rm
trace} \Big[\partial_{x_{n}}\pi^{+}_{\xi_{n}}\sigma_{-1}(Q^{-1})
      \times\partial^{2}_{\xi_{n}}\sigma_{-2}(Q^{-1})\Big](x_0)\nonumber\\
&=-8i\partial_{x_n}[(fh)^{-2}](fh)^{-2}\frac{3\xi_{n}^2-1}{(\xi_{n}-i)^{4}(\xi_{n}+i)^{3}}+4h'(0)(fh)^{-4}\frac{(2+i\xi_n)(3\xi_{n}^2-1)}{(\xi_{n}-i)^{5}(\xi_{n}+i)^{3}}.
\end{align}
Then we obtain
\begin{align}
{\rm\Phi_2}&=-\frac{1}{2}\int_{|\xi'|=1}\int^{+\infty}_{-\infty} -8i\partial_{x_n}[(fh)^{-2}](fh)^{-2}\frac{3\xi_{n}^2-1}{(\xi_{n}-i)^{4}(\xi_{n}+i)^{3}}d\xi_n\sigma(\xi')dx'\nonumber\\
&-\frac{1}{2}\int_{|\xi'|=1}\int^{+\infty}_{-\infty} 4h'(0)(fh)^{-4}\frac{(2+i\xi_n)(3\xi_{n}^2-1)}{(\xi_{n}-i)^{5}(\xi_{n}+i)^{3}}d\xi_n\sigma(\xi')dx'\nonumber\\ &=8i\partial_{x_n}[(fh)^{-2}](fh)^{-2}\Omega_{4}\int_{\Gamma^{+}}\frac{3\xi_{n}^2-1}{(\xi_{n}-i)^{4}(\xi_{n}+i)^{3}}d\xi_{n}dx'\nonumber\\
&-h'(0)(fh)^{-4}\Omega_{4}\int_{\Gamma^{+}}\frac{(2+i\xi_n)(3\xi_{n}^2-1)}{(\xi_{n}-i)^{5}(\xi_{n}+i)^{3}}d\xi_{n}dx'\nonumber\\
     &=8i\partial_{x_n}[(fh)^{-2}](fh)^{-2}\Omega_{4}\frac{2\pi i}{3!}\bigg[\frac{3\xi_{n}^2-1}{(\xi_{n}+i)^{3}}\bigg]^{(3)}\bigg|_{\xi_{n}=i}dx'\nonumber\\
     &-h'(0)(fh)^{-4}\Omega_{4}\frac{2\pi i}{4!}\bigg[\frac{(2+i\xi_n)(3\xi_{n}^2-1)}{(\xi_{n}+i)^{4}}\bigg]^{(4)}\bigg|_{\xi_{n}=i}dx'\nonumber\\
     &=\Big(\frac{1}{2}(fh)^{-2}\partial_{x_n}[(fh)^{-2}]-\frac{5}{8}(fh)^{-4}h'(0)\Big)\pi \Omega_{4}dx',
\end{align}
where ${\rm \Omega_{4}}$ is the canonical volume of $S^{4}.$\\
\noindent  {\bf case a)~III)}~$r=-2,~~l=-2,~~|\alpha|=j=0,~~k=1$.\\
By (\ref{a15}), we have
 \begin{align}
{\rm \Phi_3}&=-\frac{1}{2}\int_{|\xi'|=1}\int^{+\infty}_{-\infty}{\rm trace} \Big[\partial_{\xi_{n}}\pi^{+}_{\xi_{n}}\sigma_{-2}(Q^{-1})
      \times\partial_{\xi_{n}}\partial_{x_{n}}\sigma_{-2}(Q^{-1})\Big](x_0)d\xi_n\sigma(\xi')dx'\nonumber\\
      &=\frac{1}{2}\int_{|\xi'|=1}\int^{+\infty}_{-\infty}{\rm trace} \Big[\partial_{\xi_{n}}^2\pi^{+}_{\xi_{n}}\sigma_{-2}(Q^{-1})
      \times\partial_{x_{n}}\sigma_{-2}(Q^{-1})\Big](x_0)d\xi_n\sigma(\xi')dx'.
\end{align}
By Lemma \ref{qwe} and direct calculations, we have
\begin{align}\label{b4}
\partial_{\xi_{n}}^2\pi^{+}_{\xi_{n}}\sigma_{-2}(Q^{-1})&=-(fh)^{-2}\frac{i}{(\xi_{n}-i)^{3}}.
\end{align}
Combining (\ref{po}) and (\ref{b4}), we have
\begin{align}
&{\rm trace} \Big[\partial_{\xi_{n}}^2\pi^{+}_{\xi_{n}}\sigma_{-2}(Q^{-1})
      \times\partial_{x_{n}}\sigma_{-2}(Q^{-1})\Big](x_{0})|_{|\xi'|=1}\nonumber\\
&=-8i\partial_{x_n}[(fh)^{-2}](fh)^{-2}\frac{1}{(\xi_{n}-i)^{4}(\xi_{n}+i)}+8h'(0)(fh)^{-4}\frac{1}{(\xi_{n}-i)^{5}(\xi_{n}+i)^{2}}.
\end{align}
Then
\begin{align}
{\rm \Phi_3}&=-\frac{1}{2}\int_{|\xi'|=1}\int^{+\infty}_{-\infty} -8i\partial_{x_n}[(fh)^{-2}](fh)^{-2}\frac{1}{(\xi_{n}-i)^{4}(\xi_{n}+i)}d\xi_n\sigma(\xi')dx'\nonumber\\
&-\frac{1}{2}\int_{|\xi'|=1}\int^{+\infty}_{-\infty}8h'(0)(fh)^{-4}\frac{1}{(\xi_{n}-i)^{5}(\xi_{n}+i)^{2}}d\xi_n\sigma(\xi')dx'\nonumber\\ &=4i\partial_{x_n}[(fh)^{-2}](fh)^{-2}\Omega_{4}\int_{\Gamma^{+}}\frac{1}{(\xi_{n}-i)^{4}(\xi_{n}+i)}d\xi_{n}dx'\nonumber\\
&-4h'(0)(fh)^{-4}\Omega_{4}\int_{\Gamma^{+}}\frac{1}{(\xi_{n}-i)^{5}(\xi_{n}+i)^{2}}d\xi_{n}dx'\nonumber\\
     &=4i\partial_{x_n}[(fh)^{-2}](fh)^{-2}\Omega_{4}\frac{2\pi i}{3!}\bigg[\frac{1}{(\xi_{n}+i)}\bigg]^{(3)}\bigg|_{\xi_{n}=i}dx'\nonumber\\
     &-4h'(0)(fh)^{-4}\Omega_{4}\frac{2\pi i}{4!}\bigg[\frac{1}{(\xi_{n}+i)^{2}}\bigg]^{(4)}\bigg|_{\xi_{n}=i}dx'\nonumber\\
     &=\Big(-\frac{1}{2}(fh)^{-2}\partial_{x_n}[(fh)^{-2}]+\frac{5}{8}(fh)^{-4}h'(0)\Big)\pi \Omega_{4}dx'.
\end{align}
\noindent  {\bf case b)}~$r=-2,~~l=-3,~~|\alpha|=j=k=0$.\\
By (\ref{a15}), we have
 \begin{align}
{\rm \Phi_4}&=-i\int_{|\xi'|=1}\int^{+\infty}_{-\infty}{\rm trace} \Big[\pi^{+}_{\xi_{n}}\sigma_{-2}(Q^{-1})
      \times\partial_{\xi_{n}}\sigma_{-3}(Q^{-1})\Big](x_0)d\xi_n\sigma(\xi')dx'\nonumber\\
&=i\int_{|\xi'|=1}\int^{+\infty}_{-\infty}{\rm trace}\Big[\partial_{\xi_{n}}\pi^{+}_{\xi_{n}}\sigma_{-2}(Q^{-1})
      \times\sigma_{-3}(Q^{-1})\Big](x_0)d\xi_n\sigma(\xi')dx'.
\end{align}
In the normal coordinate, $g^{ij}(x_{0})=\delta^{j}_{i}$ and $\partial_{x_{j}}(g^{\alpha\beta})(x_{0})=0$, if $j<n$; $\partial_{x_{j}}(g^{\alpha\beta})(x_{0})=h'(0)\delta^{\alpha}_{\beta}$, if $j=n$.
So by Lemma A.2 in \cite{Wa3}, we have $\Gamma^{n}(x_{0})=\frac{5}{2}h'(0)$ and $\Gamma^{k}(x_{0})=0$ for $k<n$. By the definition of $\sigma^{k}$ and Lemma 2.3 in \cite{Wa3}, we have $\sigma^{n}(x_{0})=0$ and $\sigma^{k}=\frac{1}{4}h'(0)c(e_{k})c(e_{n})$ for $k<n$. By Lemma \ref{qwe}, we obtain
\begin{align}\label{kkkk}
\sigma_{-3}(Q^{-1})(x_{0})|_{|\xi'|=1}&=(fh)^{-2}\Big[-\frac{i}{(1+\xi_n^2)^2}\Big(-\frac{1}{2}h'(0)\sum_{k<n}\xi_kc(e_{k})c(e_{n})+\frac{5}{2}h'(0)\xi_n\Big)-\frac{2ih'(0)\xi_n}{(1+\xi_n^2)^3}\Big]\nonumber\\
&+2i(fh)^{-3}f|\xi|^{-4}\sum_{j=1}^n\partial_{x_j}(h)\xi_j-i(fh)^{-3}|\xi|^{-4}c(d(hf))c(\xi)\nonumber\\
&-4i(fh)^{-3}|\xi|^{-4}\xi_\mu\partial_{x_\mu}(fh)
\end{align}
and
\begin{align}\label{b8}
\partial_{\xi_{n}}\pi^{+}_{\xi_{n}}\sigma_{-2}(Q^{-1})=(fh)^{-2}\frac{i}{2(\xi_n-i)^2}.
\end{align}
We note that $i<n,~\int_{|\xi'|=1}\xi_{i_{1}}\xi_{i_{2}}\cdots\xi_{i_{2d+1}}\sigma(\xi')=0$,
so we omit some items that have no contribution for computing {\rm case~b)}.
Then
\begin{align}\label{llop}
{\rm trace}\Big[\partial_{\xi_{n}}\pi^{+}_{\xi_{n}}\sigma_{-2}(Q^{-1})
      \times\sigma_{-3}(Q^{-1})\Big](x_0)&=\frac{(fh)^{-4}h'(0)}{(\xi_n-i)^2}\Big(\frac{10\xi_n}{(1+\xi_n^2)^2}+\frac{8\xi_n}{(1+\xi_n^2)^3}\Big).
\end{align}
Therefore, we have
\begin{align}
{\rm\Phi_4}&=
 i\int_{|\xi'|=1}\int^{+\infty}_{-\infty}\frac{(fh)^{-4}h'(0)}{(\xi_n-i)^2}\Big(\frac{10\xi_n}{(1+\xi_n^2)^2}+\frac{8\xi_n}{(1+\xi_n^2)^3}\Big)d\xi_n\sigma(\xi')dx'\nonumber\\
&=10i(fh)^{-4}\Omega_{4}\int_{\Gamma^{+}}\frac{\xi_n}{(\xi_{n}-i)^{4}(\xi_{n}+i)^2}d\xi_{n}dx'\nonumber\\
&+8ih'(0)(fh)^{-4}\Omega_{4}\int_{\Gamma^{+}}\frac{\xi_n}{(\xi_{n}-i)^{5}(\xi_{n}+i)^{3}}d\xi_{n}dx'\nonumber\\
     &=10i(fh)^{-4}\Omega_{4}\frac{2\pi i}{3!}\bigg[\frac{\xi_n}{(\xi_{n}+i)^2}\bigg]^{(3)}\bigg|_{\xi_{n}=i}dx'\nonumber\\
     &+8ih'(0)(fh)^{-4}\Omega_{4}\frac{2\pi i}{4!}\bigg[\frac{\xi_n}{(\xi_{n}+i)^{3}}\bigg]^{(4)}\bigg|_{\xi_{n}=i}dx'\nonumber\\
     &=-\frac{15}{8}(fh)^{-4}h'(0)\pi \Omega_{4}dx'.
\end{align}
\noindent {\bf  case c)}~$r=-3,~~l=-2,~~|\alpha|=j=k=0$.\\
By (\ref{a15}), we have
\begin{align}
{\rm\Phi_5}&=-i\int_{|\xi'|=1}\int^{+\infty}_{-\infty}{\rm trace} \Big[\pi^{+}_{\xi_{n}}\sigma_{-3}(Q^{-1})
      \times\partial_{\xi_{n}}\sigma_{-2}(Q^{-1})\Big](x_0)d\xi_n\sigma(\xi')dx'\nonumber\\
      &=-i\int_{|\xi'|=1}\int^{+\infty}_{-\infty}{\rm trace} \Big[\partial_{\xi_{n}}\sigma_{-2}(Q^{-1})
      \times\sigma_{-3}(Q^{-1})\Big](x_0)d\xi_n\sigma(\xi')dx'\nonumber\\
      &-i\int_{|\xi'|=1}\int^{+\infty}_{-\infty}{\rm trace} \Big[\pi^{+}_{\xi_{n}}\sigma_{-2}(Q^{-1})
      \times\partial_{\xi_{n}}\sigma_{-3}(Q^{-2})\Big](x_0)d\xi_n\sigma(\xi')dx'\nonumber\\
      &=\Phi_4-i\int_{|\xi'|=1}\int^{+\infty}_{-\infty}{\rm trace} \Big[\pi^{+}_{\xi_{n}}\sigma_{-2}(Q^{-1})
      \times\partial_{\xi_{n}}\sigma_{-3}(Q^{-2})\Big](x_0)d\xi_n\sigma(\xi')dx'.
\end{align}
By Lemma \ref{qwe}, we have
\begin{align}
\partial_{\xi_{n}}\sigma_{-2}(Q^{-1})&=-2(fh)^{-2}\frac{\xi_n}{(1+\xi_{n}^2)^{2}}.
\end{align}
We note that $i<n,~\int_{|\xi'|=1}\xi_{i_{1}}\xi_{i_{2}}\cdots\xi_{i_{2d+1}}\sigma(\xi')=0$,
so we omit some items that have no contribution for computing {\rm case~c)}.\\
Then by (\ref{kkkk}), we have
\begin{align}\label{B58}
{\rm trace} \Big[\pi^{+}_{\xi_{n}}\sigma_{-2}(Q^{-1})
      \times\partial_{\xi_{n}}\sigma_{-3}(Q^{-2})\Big](x_0)&=\frac{8(fh)^{-4}h'(0)(5i\xi_n^4+9i\xi_n^2)}{(\xi_n+i)^5(\xi_n-i)^5}.
\end{align}
Then we obtain
\begin{align}
{\rm \Phi_5}&=\Phi_4
 -i h'(0)\int_{|\xi'|=1}\int^{+\infty}_{-\infty}
 \frac{8(fh)^{-4}h'(0)(5i\xi_n^4+9i\xi_n^2)}{(\xi_n+i)^5(\xi_n-i)^5}d\xi_n\sigma(\xi')dx' \nonumber\\
&=\Phi_4+8(fh)^{-4}h'(0)\Omega_4\int_{\Gamma^{+}}\frac{5\xi_n^4+9\xi_n^2}{(\xi_{n}-i)^{5}(\xi_{n}+i)^{5}}d\xi_{n}dx'\nonumber\\
     &=\Phi_4+8(fh)^{-4}h'(0)\Omega_4\frac{2\pi i}{4!}\bigg[\frac{5\xi_n^4+9\xi_n^2}{(\xi_{n}+i)^{5}}\bigg]^{(4)}\bigg|_{\xi_{n}=i}dx'\nonumber\\
     &=\Phi_4+\frac{15}{4}(fh)^{-4}h'(0)\pi \Omega_{4}dx'\nonumber\\
     &=\frac{15}{8}(fh)^{-4}h'(0)\pi \Omega_{4}dx'.
\end{align}
Now $\Phi$ is the sum of the cases (a), (b) and (c), then
\begin{equation}\label{ppp}
\Phi=\sum_{i=1}^5\Phi_i=0.
\end{equation}
Then, by (\ref{ppp}), we obtain Theorem \ref{bthm2}.
\end{proof}

\section*{ Declarations}
\textbf{Ethics approval and consent to participate:} Not applicable.

\textbf{Consent for publication:} Not applicable.

\textbf{Availability of data and materials:} The authors confrm that the data supporting the fndings of this study are available within the article.

\textbf{Competing interests:} The authors declare no competing interests.

\textbf{Funding:} This research was funded by National Natural Science Foundation of China: No.11771070, 2023-BSBA-118 and N2405015..

\textbf{Author Contributions:} All authors contributed to the study conception and design. Material preparation,
data collection and analysis were performed by TW and YW. The frst draft of the manuscript was written
by TW and all authors commented on previous versions of the manuscript. All authors read and approved
the final manuscript.
\section*{Acknowledgements}
This work was supported by NSFC No.11771070, 2023-BSBA-118 and N2405015. The authors thank the referee for his (or her) careful reading and helpful comments.

\section*{References}

\end{document}